\newtheorem{theorem}{Theorem}[section]
\newtheorem{corollary}[theorem]{Corollary}
\newtheorem{proposition}[theorem]{Proposition}
\theoremstyle{definition}
\theoremstyle{remark}
\newtheorem{remark}[theorem]{Remark}
\numberwithin{equation}{section}
\def\Label#1{\label{#1}}
\newcounter{stepsctr}
\newcommand{\dist}{\operatorname{dist}}
\newcommand{\mbb}{\mathbb}
\newcommand{\mc}{\mathcal}
\newcommand{\ol}{\overline}
\newcommand{\ti}{\textit}
\newcommand{\tu}{\textup}
\renewcommand{\(}{\left(}
\renewcommand{\)}{\right)}
\def\bC{\mathbb C}
\def\bR{\mathbb R}
\def\bD{\mathbb D}
\def\bT{\mathbb T}
\def\EL{\mathcal E\!\mathcal L}
\def\EB{\mathcal E\!\mathcal B}
\begin{document}

\title{Two-dimensional shapes and lemniscates
}
\author{P. Ebenfelt}
\address{Department of Mathematics, University of California at San Diego, La Jolla, CA 92093-0112}
\email{pebenfel@math.ucsd.edu}
\author{D. Khavinson}
\address{Department of Mathematics \& Statistics, University of South Florida, Tampa, FL 33620-5700}
\email{dkhavins@cas.usf.edu}
\thanks{The first two authors were partially supported by the NSF grants DMS-0701121 and  DMS-0855597, respectively. The second author learned about Kirillov's theorem during Prof. Alexander Vassili'ev's visit to the University of South Florida that
 was supported by the NSF grants DMS-075305, and the USF Grant for Graduate Education and Research in Computer Vision and Pattern Recognition.
 The authors are also indebted to Prof. Boris Shapiro for pointing out several relevant references.}

\author{H. S. Shapiro}
\address{Department of Mathematics, Royal Institute of Technology, Stockholm, Sweden 100 44}
\email{shapiro@math.kth.se}

\date{}


\maketitle

\section{Introduction}

The newly emerging field of vision and pattern recognition focuses on the study of 2-dimensional ``shapes'', i.e., simple, smooth, closed curves in the
plane. A common approach to describing shapes consists of defining ``natural'' distances between them, embedding the shapes into a metric space and then studying
 the mathematical structure of the latter. Of course, the resulting metric space must faithfully represent the continuous variability of shapes and
 reflect in their classification a similarity between them, i.e., not make a distinction between the shapes obtained from one another by scaling or
 translation (cf.\ \cite{BCMS} and many references therein). Also, one may consult \cite{SSJD} for extensions to analysis of ``surfaces'', more
 specifically, surfaces of a human face.

Another idea which has apparently been pioneered by A. Kirillov
\cite{Ki1,Ki2} and developed by Mumford and Sharon \cite{SM} and
many others, consists of representing each shape by its
``fingerprint'', an orientation-preserving diffeomorphism of the unit circle. In this
context every shape defines a unique equivalence class of such
diffeomorphisms (up to a right
composition with a M\"{o}bius automorphism of the unit disk onto
itself.) More precisely, let $\Gamma$ be a smooth, simple closed
curve (a Jordan curve) in $\bC$,  $\Omega_-$ the region enclosed by $\Gamma$ (i.e.\ the bounded component of $\bC\setminus \Gamma$), and
$\Omega_+:=\hat{\mbb{C}}\setminus\overline{\Omega}_-$, where $\hat \bC$ denotes the Riemann sphere $\hat\bC:=\bC\cup\{\infty\}$. Let
$\Phi_-:\mbb{D}\to\Omega_-$, $\Phi_+:\mbb{D}_+\to\Omega_+$, where
$\mbb{D}=\{|z|<1\}$ is the unit disk, $\mbb{D}_+$ its complement in $\hat\bC$, and
$\Phi_\pm$ are conformal maps (whose existence is guaranteed by the Riemann mapping theorem). We accept the normalization
$\Phi_+(\infty)=\infty$ and $\Phi'_+(\infty)>0$, where the latter means that $\Phi_+$ has a Laurent series expansion in neighborhood of infinity,
$$
\Phi_+(z)=az+\sum_{k=0}^\infty a_k z^{-k},
$$
with $a>0$.
From now on we shall
assume $\Gamma$ to be a $C^\infty$ curve, and hence $\Phi_\pm$ extend in a $C^\infty$ fashion to the boundaries of their respective domains. Let
$\mbb{T}=\partial\mbb{D}$ denote the unit circle and consider
$k:=\Phi^{-1}_+\circ\Phi_-:\mbb{T}\to\mbb{T}$, an orientation preserving
$C^\infty$-diffeomorphism of the unit circle onto itself.   We can
think of the derivative $k'$ as a $2\pi$-periodic function
 on $\mbb{R}$. We have then $k(x+2\pi)=k(x)+2\pi$ and $k'>0$. Obviously, $k$ is uniquely determined
by $\Gamma$ up to a M\"{o}bius automorphism of $\mbb{D}$, i.e.\ up to right composition $k\circ\phi$ with
\begin{equation}\Label{mobius}
\phi(z)=\lambda\frac{z-a}{1-\bar a z},\quad |\lambda|=1,\quad a\in\bD.
\end{equation}
The equivalence class of the diffeomorphism $k$ under the action of the M\"obius group of automorphisms \eqref{mobius} is called the {\it fingerprint} of $\Gamma$. Moreover, if $\tilde \Gamma$ denotes the curve $A(\Gamma)$, where $A$ is the affine transformation
\begin{equation}\Label{aff}
A(z)=az+b,\quad a>0,\quad b\in\bC,
\end{equation}
then the fingerprint of $\tilde \Gamma$ equals that of $\Gamma$, as is easily verified. Thus, we have a map $\mc{F}$ from the set of all smooth
Jordan curves $\Gamma$ modulo scaling and translation as in \eqref{aff}, {\it shapes}, into the set
of all orientation preserving diffeomorphisms $k$ of the circle modulo  M\"{o}bius
automorphisms \eqref{mobius} of the unit disk, {\it fingerprints}, . The following theorem was first explicitly stated in
\cite{Ki1,Ki2}, although as is noted in \cite{Ki1,Ki2,SM} it follows
more or less directly from the results of Ahlfors and Bers \cite{AB}
on solutions of Beltrami equation (cf.\ \cite{Tak:Teo}).

\begin{theorem}\Label{t:Kir}
The map $\mc{F}$ is a bijection.
\end{theorem}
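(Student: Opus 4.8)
The plan is to treat injectivity and surjectivity of $\mc F$ separately; injectivity is elementary, while surjectivity rests on the Ahlfors--Bers solvability of the Beltrami equation. For injectivity, suppose two shapes $\Gamma_1,\Gamma_2$ have the same fingerprint, and let $\Phi^{(j)}_\pm$, $k_j$ ($j=1,2$) be the corresponding conformal maps and welding diffeomorphisms. Right-composing $\Phi^{(1)}_-$ with a suitable M\"obius automorphism of $\bD$ alters neither $\Gamma_1$ nor its fingerprint, so I may assume $k_1=k_2=:k$. I would then splice the two ``conformal transition'' maps into a single map $F$ of $\hat\bC$, equal to $\Phi^{(2)}_-\circ(\Phi^{(1)}_-)^{-1}$ on $\ol{\Omega^{(1)}_-}$ and to $\Phi^{(2)}_+\circ(\Phi^{(1)}_+)^{-1}$ on $\ol{\Omega^{(1)}_+}$. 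The two branches agree on $\Gamma_1$ precisely because $k_1=k_2$, so $F$ is a homeomorphism of $\hat\bC$ that is holomorphic on $\hat\bC\setminus\Gamma_1$ and continuous across it. Since a $C^1$ arc is removable for continuous functions (Morera's theorem), $F$ is holomorphic on all of $\hat\bC$, hence a M\"obius transformation; it fixes $\infty$ (as $\Phi^{(2)}_+(\infty)=\infty$), and comparing the Laurent expansions of $\Phi^{(1)}_+$ and $\Phi^{(2)}_+$ at $\infty$ shows its leading coefficient there is positive, so $F$ is an affine map $z\mapsto az+b$ with $a>0$. As $F(\Gamma_1)=\Gamma_2$, the two curves represent the same shape.

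For surjectivity, given an orientation-preserving $C^\infty$ diffeomorphism $k$ of $\bT$, the idea is conformal welding via the measurable Riemann mapping theorem. First I would extend $k$ to a $C^\infty$ diffeomorphism $K$ of $\ol{\bD}_+$ with $K(\infty)=\infty$, $K'(\infty)>0$, and — crucially — with complex dilatation $\mu_K:=K_{\bar z}/K_z$ vanishing to infinite order on $\bT$. Such an extension is produced by a Borel-type argument: the Taylor jet of $K$ transverse to $\bT$ that makes $K$ formally satisfy the Cauchy--Riemann equations along $\bT$ is uniquely determined by the tangential derivatives of $k$, and Borel's lemma realizes this jet by a genuine $C^\infty$ map. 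Then $\mu$, equal to $\mu_K$ on $\bD_+$ and to $0$ on $\ol\bD$, lies in $C^\infty(\hat\bC)$ with $\|\mu\|_\infty<1$. By Ahlfors--Bers there is a quasiconformal $w:\hat\bC\to\hat\bC$ solving $w_{\bar z}=\mu w_z$, which I normalize by $w(\infty)=\infty$, $w'(\infty)>0$; since $\mu$ is smooth, the regularity theory for the Beltrami equation makes $w$ a $C^\infty$ diffeomorphism, so $\Gamma:=w(\bT)$ is a $C^\infty$ Jordan curve bounding $\Omega_-:=w(\bD)$ and $\Omega_+:=w(\bD_+)$. Now $\Phi_-:=w|_\bD$ is conformal (there $\mu\equiv0$), and $\Phi_+:=w\circ K^{-1}|_{\bD_+}$ is conformal because on $\bD_+$ the maps $w$ and $K$ share the dilatation $\mu_K$, so $w\circ K^{-1}$ has dilatation $0$ by the uniqueness part of Ahlfors--Bers. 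A short boundary computation using $K|_\bT=k$ then gives $\Phi_+^{-1}\circ\Phi_-=k$ on $\bT$, i.e.\ $\mc F(\Gamma)=[k]$.

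The routine parts are the boundary bookkeeping in both halves and the fact, essentially contained in the discussion above, that $\mc F$ is well defined. The one genuinely technical point — and the only place the full force of $k\in C^\infty$ enters — is the construction of $K$ with dilatation flat on $\bT$: without it $\mu$ jumps across $\bT$ and the welded curve is merely a quasicircle, whereas with it the $C^\infty$-regularity of $\Gamma$ is handed to us for free by the Ahlfors--Bers estimates applied to the smooth coefficient $\mu$.
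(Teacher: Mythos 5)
Your argument is correct, but it is worth being clear about what it is being compared to: the paper does not actually prove Theorem \ref{t:Kir}. It states the theorem with a citation to Kirillov and to Ahlfors--Bers, and the whole point of the paper is an \emph{alternative, approximate} route via lemniscates (Theorems \ref{lemnifp}, \ref{approx}, \ref{bpasfp}): fingerprints of proper degree-$n$ lemniscates are exactly the $n$th roots of degree-$n$ Blaschke products, these are $C^1$-dense among all diffeomorphisms, and each such root is realized by a unique lemniscate. So your surjectivity half is a fleshed-out version of the classical welding proof the paper merely cites, not of anything the paper carries out; it buys the exact statement in one stroke, where the paper's lemniscate method buys an explicit, ``algebraic'' dense family of shapes but only density, not surjectivity onto all fingerprints. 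Your injectivity half, on the other hand, coincides essentially verbatim with the uniqueness argument the paper does reproduce inside the proof of Theorem \ref{bpasfp}: splice $\tilde\Phi_-\circ\Phi_-^{-1}$ and $\tilde\Phi_+\circ\Phi_+^{-1}$ across $\Gamma_1$, remove the $C^1$ curve by Morera, and read off an affine map from the normalization at $\infty$.

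Two small points to tighten in the welding half. First, ``$w'(\infty)>0$'' is not meaningful as stated, since $w$ is only quasiconformal near $\infty$ (where $\mu=\mu_K\not\equiv 0$); the correct normalization is to post-compose $w$ with a rotation $z\mapsto\lambda z$ so that the \emph{conformal} map $\Phi_+=w\circ K^{-1}$ satisfies $\Phi_+'(\infty)>0$. This matters because, as the paper observes, multiplying a fingerprint by a unimodular constant changes the shape by a rotation, so the rotational normalization cannot be absorbed into the M\"obius equivalence on the right. Second, the Borel/almost-analytic construction of $K$ directly produces a diffeomorphism only of a collar of $\bT$ (the Jacobian $|K_z|^2-|K_{\bar z}|^2=|K_z|^2>0$ on $\bT$ because $k'\neq0$ and $K_{\bar z}=0$ there); one must still extend it to a diffeomorphism of all of $\ol{\bD}_+$ fixing $\infty$, which is standard but should be said. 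You are right that the flat-dilatation extension is the crux: with merely bounded $\mu$ jumping across $\bT$ one only gets a quasicircle, and the $C^\infty$ smoothness of $\Gamma$ comes from elliptic regularity for the globally smooth Beltrami coefficient.
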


Using this theorem and well-developed software packages, e.g., \cite{Dr}, Mumford and Sharon show how in principle one may recover (i.e., approximate)
 $\Gamma$ from its fingerprint $k$ and vice-versa, by approximating $\Gamma$ by polygons and using the Schwarz--Christoffel formula.
 The experimental data presented in \cite{SM} looks stunningly convincing.

Note in passing that if we relax significantly the smoothness hypothesis on $\Gamma$, the map $\Gamma\to k$ from closed curves to circle
\ti{homeomorphisms} is neither onto nor one-to-one (cf.\ \cite{Bi}).

In this paper, we present a somewhat ``ideologically'' different
explanation of why Kirillov's theorem is true, motivated by a well-known
theorem of Hilbert (cf.\ \cite[Ch.\ 4]{Wa}) stating that any smooth
curve can be approximated (with respect to the Hausdorff distance in the plane \eqref{Hausmetric}) by polynomial lemniscates. A
considerable advantage in this approach lies in the observation
that the fingerprint of a polynomial
lemniscate of degree $n$ is particularly simple, an $n^{\tu{th}}$
root of a (finite) Blaschke product of degree $n$ (Theorem \ref{lemnifp}). We then show that every smooth, orientation-preserving diffeomorphism of the circle can be
approximated in the $C^1$-norm by these former simple ones (Theorem \ref{approx}). To complete the picture in this approach, we show that a diffeomorphism of the unit circle given by the $n^{\tu{th}}$ root of a Blaschke product of degree $n$ is the fingerprint of a unique polynomial lemniscate of degree $n$ (Theorem \ref{bpasfp}). Although we
have not been able yet to reconstruct effectively a lemniscate from
its fingerprint on the circle, we still hope that via
associated finite Blaschke products lemniscates could serve as
natural and convenient ``coordinates'' in the enormous space of
smooth shapes.

In Section \ref{s:lemfp},  we discuss fingerprints of
polynomial lemniscates and show that the diffeomorphisms induced
by roots of finite Blaschke products approximate all smooth diffeomorphisms
of the circle. In Section \ref{s:bpasfp}, we prove that the diffeomorphisms
induced by roots of finite Blaschke products are fingerprints of
lemniscates. We end with some questions and remarks assembled in \S4.

\section{Lemniscates and their fingerprints}\Label{s:lemfp}

A (polynomial) lemniscate of degree $n$ is a subset $\Gamma\subset \bC$ of the form $$\{z\in \bC\colon |P(z)|=1\},$$ where $P(z)$ is a polynomial in $z$ of degree $n$. We let $\Omega_-:=\{z\in\bC\colon |P(z)|<1\}$ and $\Omega_+:=\hat\bC\setminus\overline{\Omega_-}=\{z\in \bC\colon|P(z)|>1\}\cup\{\infty\}$. An immediate consequence of the maximum modulus theorem is that $\Omega_+$ can have no bounded components and, hence, is a connected open subset containing a neighborhood of $\infty$ in $\hat \bC$. We shall say that $\Gamma$ is a {\it proper} lemniscate of degree $n$ if $\Gamma$ is smooth ($P'(z)\neq 0$ on $\Gamma$) and $\Omega_-$ is connected. Note that the interior $\Omega_-$ of a proper lemniscate of degree $n$ (or, for a general smooth lemniscate, each component of $\Omega_-$) is also simply connected, since its complement is connected.

Recall that the critical points of a polynomial $P(z)$ of degree $n$ are the zeros of its derivative $P'(z)$. Let $z_1,\ldots,z_{n-1}$ be the critical points (repeated according to their multiplicity) of $P(z)$. The values $w_1,\ldots,w_{n-1}$, where $w_k=P(z_k)$ for $k=1,\ldots,n-1$, are called the critical values of $P(z)$. The multiplicity of a critical value $w$ is the number of times it appears in the list $w_1,\ldots, w_{n-1}$. For a smooth lemniscate $\Gamma$ of degree $n$, the property of being proper can be characterized by the critical values of its defining polynomial.

\begin{proposition}\Label{propcrit} Let $P(z)$ be a polynomial of degree $n$ and assume that the lemniscate $\Gamma=\{z\in\bC\colon |P(z)|=1\}$ is smooth. The following are equivalent:

\smallskip
\noindent
{\rm (i)} The open set $\Omega_-=\{z\in\bC\colon |P(z)|<1\}$ is connected (i.e.\ $
\Gamma$ is a proper lemniscate of degree $n$).

\smallskip
\noindent
{\rm (ii)} All the critical values $w_1,\ldots,w_{n-1}$ of $P(z)$ satisfy $|w_k|<1$.
\end{proposition}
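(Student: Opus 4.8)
The plan is to realize $\Omega_-$ as the preimage $P^{-1}(\bD)$ and to exploit that $P$ restricts there to a proper holomorphic map onto the disk, whose degree and ramification are pinned down by the fundamental theorem of algebra together with the location of the critical values.

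First I would set up the structure. Since $P$ is a nonconstant polynomial, $|P(z)|\to\infty$ as $|z|\to\infty$, so $P\colon\Omega_-\to\bD$ is proper, and $\Omega_-$ is nonempty (it contains the zeros of $P$). The smoothness hypothesis $P'\neq 0$ on $\Gamma$ means no critical value lies on $\bT$, so each $w_k$ satisfies either $|w_k|<1$ or $|w_k|>1$; equivalently, every critical point of $P$ lies in $\Omega_-$ or in $\Omega_+$. Write $\Omega_-^{(1)},\dots,\Omega_-^{(c)}$ for the connected components of $\Omega_-$; as noted just before the proposition, each $\Omega_-^{(j)}$ is simply connected, and $P$ restricts to a proper holomorphic map $\Omega_-^{(j)}\to\bD$ of some degree $d_j\ge 1$. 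Counting preimages of a regular value $w\in\bD$, and using that all $n$ roots of $P(z)=w$ lie in $\Omega_-$, gives $\sum_{j=1}^{c}d_j=n$.

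Next comes the ramification count, which is the heart of the matter. Composing with a Riemann map $\psi_j\colon\bD\to\Omega_-^{(j)}$, the map $P\circ\psi_j\colon\bD\to\bD$ is a proper holomorphic self-map of the disk, hence a finite Blaschke product of degree $d_j$; such a product has exactly $d_j-1$ critical points in $\bD$ (alternatively, apply the Riemann--Hurwitz formula to the proper map $\overline{\Omega_-^{(j)}}\to\overline{\bD}$ of simply connected bordered surfaces, each of Euler characteristic $1$). Since $\psi_j$ is conformal, this says that $P$ has exactly $d_j-1$ critical points in $\Omega_-^{(j)}$, counted with multiplicity. Summing over $j$, the number of critical points of $P$ lying in $\Omega_-$, counted with multiplicity, equals $\sum_{j=1}^{c}(d_j-1)=n-c$.

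To finish, let $\nu$ be the number of indices $k$ with $|w_k|<1$; this is exactly the number of critical points of $P$ in $\Omega_-$ counted with multiplicity, so $\nu=n-c$, i.e.\ $c=n-\nu$. Since $P$ has $n-1$ critical points in all, $\nu\le n-1$, with equality precisely when all $n-1$ critical values lie in $\bD$. Hence $c=1$ if and only if $\nu=n-1$ if and only if (ii) holds, which is the asserted equivalence (i)$\Leftrightarrow$(ii). I expect the only real obstacle to be making the ramification bookkeeping airtight: checking that $P\colon\Omega_-^{(j)}\to\bD$ is genuinely proper of a well-defined degree, that multiplicities of critical points match up under the conformal map $\psi_j$, and that ``$d_j-1$ critical points'' refers to points strictly inside $\bD$ (none on $\bT$, by smoothness); the remaining assertions are routine consequences of the fundamental theorem of algebra.
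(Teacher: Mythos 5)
Your proof is correct and follows essentially the same route as the paper: decompose $\Omega_-$ into its $c$ simply connected components, count zeros ($\sum d_j=n$) and critical points ($\sum(d_j-1)=n-c$) of $P$ in each, and conclude $c=1$ iff all $n-1$ critical values lie in $\bD$. The only cosmetic difference is that you derive the count $d_j-1$ via the identification of $P\circ\psi_j$ with a finite Blaschke product, whereas the paper applies the Riemann--Hurwitz formula directly to the proper map $\Omega_-^{(j)}\to\bD$; you note this alternative yourself, and the two are interchangeable here.
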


\begin{proof}  First note that no critical value can satisfy $|w_k|=1$ since $\Gamma$ is smooth. Let $r$ denote the number of critical points, counted with multiplicities, in $\Omega_-$. Statement (ii) above is then equivalent to $r=n-1$. The equivalence of (i) and (ii) is a simple consequence of the classical Riemann-Hurwitz formula. Indeed, let $\Omega_1,\ldots, \Omega_k$ denote the components of $\Omega_-$, and $d_j$ and $r_j$ for $j=1,\ldots,k$ the number of zeros and critical points, respectively, of $P$ in $\Omega_j$. We then have $d_1+\ldots+d_k=n$ and $r_1+\ldots+r_k=r$. Now, let $f_j$ denote the restriction of $P$ to $\Omega_j$. Each $f_j$ is then a $d_j$-to-1 ramified covering (proper analytic map) $f_j\colon \Omega_j\to \mathbb D$ with total ramification number $r_j$. Since each $\Omega_j$, as well as $\mathbb D$, is simply connected and hence has Euler characteric $1$, the Riemann-Hurwitz formula in this setting (see e.g.\ \cite {St}) states that
$$
-1=-d_j+r_j,\quad j=1,\ldots k.
$$
By summing over $j$, we obtain
$-k=-n+r$ or $r=n-k$. Thus, we have $r=n-1$ if and only if the number of components of $\Omega_-$ is one.
\end{proof}

Consider a proper lemniscate $\Gamma=\{z\in \bC\colon |P(z)|=1\}$ of degree $n$. Clearly, there is no loss of generality in assuming that the degree $n$ coefficient of $P(z)$ is real and positive, i.e.\
\begin{equation}\Label{polydef}
P(z)=a_nz^n+a_{n-1}z^{n-1}+\ldots + a_0,\quad a_n>0,\quad \text{{\rm $a_k\in \bC$ for $k=0,\ldots,n-1$.}}
\end{equation}
Let $\xi_1,\ldots,\xi_n\in \Omega_-$ be the zeros of $P$ (repeated with multiplicity), and  $\Phi_-:\bD\to\Omega_-$ a Riemann mapping. Then, $P\circ\Phi_-$ is an $n$-to-$1$ ramified covering $\bD\to\bD$ and hence must be a finite Blaschke product of degree $n$, i.e.,
\begin{equation}
\Label{polybp}
B_1(z):=(P\circ\Phi_-)(z)=e^{i\theta}\prod_1^n\frac{z-a_j}{1-\ol{a_j}z},
\quad a_j=\Phi_-^{-1}\(\xi_j\),\quad \theta\in \bR.
\end{equation}
Indeed, $\(P\circ
\Phi_-\)/B_1$ is analytic in $\mbb{D}$, does not vanish there, is
continuous in the closed disk and has modulus one on
$\mbb{T}:=\partial\mbb{D}$, and thus is a unimodular constant.

Now let $\Phi_+^{-1}:\Omega_+\to\mbb{D}_+$ be the conformal mapping from the exterior $\Omega_+\subset\hat \bC$ of $\Gamma$ onto the exterior of the unit disk $\mbb{D}_+\subset\hat\bC$, normalized by $\Phi_+^{-1}(\infty)=\infty$, $\(\Phi_+^{-1}\)'(\infty)>0$. We claim that
\begin{equation}\Label{nroot}
 \Phi_+^{-1}(w)=\sqrt[n]{P(w)},
\end{equation}
where we choose a suitable branch of the $n$th root (i.e.\ $\sqrt[n]{1}=1$) to comply with our normalization of $\(\Phi_+^{-1}\)'(\infty)>0$. To see this, we note that $P\circ\Phi_+$ has a pole of order $n$ at $\infty$, no other poles in $\bD_+$, and maps $\bD_+$ to itself sending the boundary $\mathbb T=\partial \bD_+$ to itself; i.e. $P\circ\Phi_+$ is a ramified $n$-to-1 covering $\bD_+\to \bD_+$. It follows, as above, that $B_2:=P\circ\Phi_+$ is a Blaschke product of degree $n$ and since $B_2$ has all its poles at $\infty$ we conclude that $B_2(z)=cz^n$, where $c$ is a unimodular constant. Since $(\Phi'_+)(\infty)>0$ and the highest degree coefficients $a_n$ of $P$ is positive, we deduce that in fact $c=1$ and $B_2(z)=z^n$. The identity \eqref{nroot} follows readily. Since the fingerprint of $\Gamma$ is given by $k=\Phi^{-1}_-\circ \Phi_+$, we obtain the following theorem.

\begin{theorem}\Label{lemnifp} Let $P$ be a polynomial of degree $n$ such that $\Gamma=\{z\in \bC\colon |P(z)|=1\}$ is a proper lemniscate of degree $n$. Let $\Omega_-$ be the interior of $\Gamma$ and $\Phi_-\colon \bD\to \Omega_-$ a Riemann mapping. Then, the fingerprint $k\colon \mathbb T\to \mathbb T$ of $\Gamma$ is given by
\begin{equation}\Label{nrootbp}
k(z)=\sqrt[n]{B(z)}
\end{equation}
where $B$ is the Blaschke product
\begin{equation}\Label{PcompPhi}
B(z)=e^{i\theta}\prod_{k=1}^n\frac{z-a_k}{1-\bar a_k z},\quad a_k=\Phi_-^{-1}(\xi_k),\quad \theta\in \bR,
\end{equation}
and $\xi_1,\ldots,\xi_n$ denote the zeros of $P$ repeated according to multiplicity.
\end{theorem}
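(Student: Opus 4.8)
The plan is to read off the theorem from the two identifications established in the paragraphs preceding its statement, namely $P\circ\Phi_-=B$ (a degree-$n$ Blaschke product) and $\Phi_+^{-1}=\sqrt[n]{P}$, combined with the definition $k=\Phi_+^{-1}\circ\Phi_-$ of the fingerprint; so the argument is really an organized recollection of those steps together with a careful treatment of the branch of the root.

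First I would normalize $P$ so that its leading coefficient $a_n$ is positive, as in \eqref{polydef}; this is harmless, since it only multiplies $P$ by a unimodular constant, leaving $\Gamma$, $\Omega_-$ and $\Omega_+$ unchanged and merely shifting the constant $\theta$ in \eqref{PcompPhi}. Because $\Gamma$ is a proper lemniscate it is smooth, $\Omega_-$ is connected, and (by the standing smoothness assumption of the introduction) $\Phi_-$ extends $C^\infty$-smoothly to $\ol{\bD}$. The $n$ zeros $\xi_1,\dots,\xi_n$ of $P$ automatically lie in $\Omega_-=\{|P|<1\}$, and $|P|\to1$ as $z\to\partial\Omega_-=\Gamma$, so $P|_{\Omega_-}\colon\Omega_-\to\bD$ is a proper holomorphic map of degree $n$. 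Composing with $\Phi_-$, the map $B:=P\circ\Phi_-\colon\bD\to\bD$ is a proper holomorphic self-map of $\bD$ of degree $n$ with zero set $\{a_k=\Phi_-^{-1}(\xi_k)\}$, hence the finite Blaschke product \eqref{PcompPhi}; equivalently, as in \eqref{polybp}, the quotient of $P\circ\Phi_-$ by $\prod_k (z-a_k)/(1-\bar a_k z)$ is analytic and zero-free on $\bD$, continuous up to $\bT$, and unimodular there, hence a unimodular constant.

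Next I would record the exterior identity \eqref{nroot}: since all zeros of $P$ lie in $\Omega_-$, the function $P\circ\Phi_+$ is analytic and zero-free on $\bD_+\setminus\{\infty\}$, has a single pole of order $n$ at $\infty$, and is unimodular on $\bT$; therefore $(P\circ\Phi_+)(z)/z^n$ is analytic and zero-free on all of $\bD_+$, unimodular on $\bT$, and hence a unimodular constant $c$, which the normalizations $a_n>0$ and $\Phi_+'(\infty)>0$ force to equal $1$ (the leading term of $P\circ\Phi_+$ at $\infty$ being $a_n(\Phi_+'(\infty))^n z^n$, with positive coefficient). Thus $(P\circ\Phi_+)(z)=z^n$ on $\bD_+$, i.e.\ $\Phi_+^{-1}(w)=\sqrt[n]{P(w)}$ on $\ol{\Omega_+}$ for the branch normalized as in \eqref{nroot}, and both sides extend continuously to $\Gamma$.

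Finally, since the fingerprint of $\Gamma$ is $k=\Phi_+^{-1}\circ\Phi_-$ on $\bT$, for $\zeta\in\bT$ we have $\Phi_-(\zeta)\in\Gamma$ and hence $k(\zeta)^n=\bigl(\Phi_+^{-1}(\Phi_-(\zeta))\bigr)^n=P(\Phi_-(\zeta))=B(\zeta)$, which is \eqref{nrootbp}. The one point requiring care — and the step I would flag as the main (though largely bookkeeping) obstacle — is the meaning of the $n$th root in \eqref{nrootbp}: along $\bT$ the Blaschke product $B$ has winding number $n$ about the origin, so $\arg B$ increases by $2\pi n$ over one loop, and hence $\sqrt[n]{B}$ does close up to a single-valued continuous map $\bT\to\bT$ of winding number one; among its $n$ continuous branches one must select the one agreeing with $\Phi_+^{-1}\circ\Phi_-$, pinned down by matching values at a single point as in \eqref{nroot}. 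That this branch is moreover an orientation-preserving diffeomorphism then follows either from Theorem \ref{t:Kir} or directly from the non-vanishing of its derivative on $\bT$.
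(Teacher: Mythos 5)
Your argument is correct and follows essentially the same route as the paper: establish $P\circ\Phi_-=B$ via the unimodular-quotient argument, establish $P\circ\Phi_+ =z^n$ (i.e.\ $\Phi_+^{-1}=\sqrt[n]{P}$) from the normalizations $a_n>0$ and $\Phi_+'(\infty)>0$, and compose to get $k=\Phi_+^{-1}\circ\Phi_-=\sqrt[n]{B}$. Your added care about the choice of branch and the winding number of $B$ on $\bT$ is a worthwhile clarification of a point the paper leaves implicit, but it does not change the substance of the proof.
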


In view of Hilbert's theorem (cf.\ \cite[Ch.\ 4]{Wa}) that every
smooth curve can be approximated by lemniscates in the Hausdorff
metric, which measures the distance between two curves $C_1$, $C_2$
as (cf.\ \cite{BCMS})
\begin{equation}\Label{Hausmetric}
\dist\(C_1,C_2\)=\sup_{z\in C_2}\inf_{w\in C_1}|z-w|+\sup_{z\in C_1}\inf_{w\in C_2}|z-w|,
\end{equation}
our next goal is to address the following two questions:
\begin{enumerate}
\item[(I)]  Do the  diffeomorphisms $k$ given by \eqref{nrootbp} approximate in some reasonable metric  all orientation preserving diffeomorphisms of
the unit circle?
\end{enumerate}

If the answer to (I) is in the affirmative, then to complete our
(alternative) approximate visualization of Theorem 1.1 we have to
answer the following question
\begin{enumerate}
\item[(II)]  Does each diffeomorphism \eqref{nrootbp} represent the fingerprint of a lemniscate?
\end{enumerate}

The following theorem answers (I).

\begin{theorem}\Label{approx}
The (algebraic) diffeomorphisms \eqref{nrootbp} approximate all
orientation preserving diffeomorphisms $\Psi$ of the circle
$\mbb{T}$ in the $C^1$-norm.
\end{theorem}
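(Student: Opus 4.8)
The plan is to reduce the statement to an approximation result for lifts to $\bR$. Write $\Psi$ as a circle diffeomorphism with lift $\psi\colon\bR\to\bR$ satisfying $\psi(x+2\pi)=\psi(x)+2\pi$ and $\psi'>0$; after composing with a M\"obius automorphism \eqref{mobius} — which changes neither the equivalence class nor the conclusion — we may normalize $\Psi$ so that, say, $\Psi(1)=1$. Now $k=\sqrt[n]{B}$ has a lift $\kappa$ with $\kappa'=\tfrac1n\,(\arg B)'$, so what we must produce is a degree‑$n$ Blaschke product $B$ whose argument increases along $\bT$ in almost exactly $n$ times the prescribed way. Concretely, I would set $g(x):=n\psi(x)$, a monotone map with $g(x+2\pi)=g(x)+2\pi n$, and seek $B$ with $\arg B(e^{ix})\approx g(x)$ in $C^1$; taking the $n$th root then gives an algebraic diffeomorphism $C^1$‑close to $\Psi$.

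The key construction is classical: given points $0=x_0<x_1<\dots<x_{nN-1}<2\pi$ chosen as the preimages under $g$ of the points $2\pi j/(nN)$, $j=0,\dots,nN-1$, one builds a finite Blaschke product of degree $nN$ whose zeros, pushed toward $\bT$, force $\arg B$ to sweep through $2\pi/(nN)$ on each arc $[x_j,x_{j+1}]$. More precisely, I would place one zero $a_j=(1-\varepsilon)e^{i x_j}$ near each $x_j$ and use the elementary fact that a single Blaschke factor $(z-a)/(1-\bar a z)$ with $a=(1-\varepsilon)e^{i\xi}$ contributes, as $\varepsilon\to 0$, an argument increment concentrated near $\xi$ that tends (in the sense of distributions, and uniformly away from $\xi$) to $2\pi$ times the unit point mass at $\xi$. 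Summing, $(\arg B)'(x)$ converges, after a smoothing at scale comparable to $\varepsilon$ and to the mesh, to a function close to $g'(x)$; choosing the mesh fine first and then $\varepsilon$ small, one gets $\|(\arg B)(\cdot)-g(\cdot)\|_{C^1}$ as small as desired. Dividing the degree by $n$ (so $nN$ with $N$ the number of interpolation points per period) and taking the $n$th root yields $k=\sqrt[n]{B}$ with $\kappa=\tfrac1n\arg B$ uniformly $C^1$‑close to $\psi$, i.e.\ $k$ $C^1$‑close to $\Psi$ on $\bT$.

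A cleaner route to the same end, which I would in fact prefer to write up, is to interpolate directly: by a theorem on finite Blaschke products, for any choice of $m$ distinct points $w_1,\dots,w_m$ on $\bT$ and prescribed distinct target points on $\bT$ respecting the cyclic order, there is a Blaschke product of degree $m$ mapping the former to the latter; applying this with $w_j=e^{ix_j}$ and targets $e^{i g(x_j)}$ (here $m=nN$, and $g$ is chosen to wind $n$ times) produces $B$ with $\arg B(e^{ix_j})=g(x_j)$ exactly. Since $B$ restricted to $\bT$ is a degree‑$m$ covering with derivative controlled by the geometry of its zeros, a standard compactness/normal-families argument shows the interpolant can be taken with $(\arg B)'$ uniformly close to $g'$ once the nodes are equidistributed for $g$ and the zeros are pushed uniformly close to $\bT$; this is where the only real work lies.

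The main obstacle is precisely the $C^1$ (not merely $C^0$) control: matching values of $\arg B$ at many nodes is routine, but keeping the derivative $(\arg B)'>0$ bounded above and below and close to $g'$ requires choosing the proximity $\varepsilon$ of the zeros to $\bT$ in the right relation to the node spacing, so that no Blaschke factor's argument "spike" is either too sharp (derivative blows up) or too flat (the $2\pi$ increment leaks onto neighbouring arcs). Handling this trade‑off — essentially a quantitative version of the fact that an approximate identity of point masses can be realized by Blaschke arguments — is the crux; everything else (periodicity, the $n$th‑root bookkeeping, invariance under \eqref{mobius}, passing from the lift back to $\bT$) is bookkeeping.
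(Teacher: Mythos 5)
Your setup coincides with the paper's: writing $k=\sqrt[n]{B}$, one has $(\arg k)'=\tfrac1n(\arg B)'=\tfrac1n\sum_{j=1}^n P\(e^{i\theta},a_j\)$, a mean of Poisson kernels at the zeros of $B$ (this is \eqref{eq2.10}), and the theorem reduces to approximating the positive density $\psi'$ \emph{uniformly} on $\bT$ by such means. But the one step you yourself call ``the crux'' --- choosing the zeros so that this mean is uniformly, and not merely weak-$*$, close to $\psi'$ --- is exactly the step you do not carry out, and the recipe you do give for it fails as stated. If you place the zeros at $a_j=(1-\varepsilon)e^{ix_j}$ with $x_j$ equidistributed for $\psi'\,dx$ and take ``the mesh fine first and then $\varepsilon$ small'', then at $\theta=x_j$ the $j$th kernel alone contributes $P\(e^{ix_j},(1-\varepsilon)e^{ix_j}\)=(2-\varepsilon)/\varepsilon$, so $\tfrac1n(\arg B)'$ develops spikes of height about $2/(n\varepsilon)\to\infty$; the sum converges to $\psi'\,dx$ only as a measure, and $C^1$-closeness of $k$ to $\Psi$ is lost. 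The limits must be taken in the opposite order (first fix the radius, then refine the mesh), and making that quantitative is the content of the proof rather than bookkeeping. Your alternative ``interpolation'' route has the same defect: matching $\arg B$ at nodes gives no control of $(\arg B)'$, and the appeal to a ``standard compactness/normal-families argument'' is not an argument. (There is also a degree mismatch: the maps \eqref{nrootbp} are $n$th roots of degree-$n$ Blaschke products, so a product of degree $nN$ must be paired with an $(nN)$th root; with $g=n\psi$ and $\deg B=nN$ the windings do not match.)

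The paper closes this gap by \emph{not} pushing the zeros to $\bT$. It first replaces $\psi'$ by a positive trigonometric polynomial $h$ with $\int_0^{2\pi} h\,d\theta=2\pi$; the harmonic extension of $h$ to $|z|>1$ is then harmonic and still positive on a slightly larger region $|z|>R$ with $R<1$, so $h$ is the Poisson integral, from the circle $\{|z|=R\}$, of a positive probability measure $\mu$ supported there (see \eqref{eq2.12}). Discretizing $\mu$ weak-$*$ by $n$ equal atoms on $\{|z|=R\}$ and using that the kernels $P\(e^{i\theta},z\)$ with $|z|=R$ are uniformly bounded and equicontinuous, one obtains genuinely uniform convergence of $\tfrac1n\sum_j P\(\cdot,a_j\)$ to $h$, with no spikes at all. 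The positivity of the harmonic extension past $\bT$ --- available because $h$ is a trigonometric polynomial rather than a general continuous function, which is why the proof first reduces to real-analytic $\Psi$ --- is the idea your sketch is missing.
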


\begin{remark}
As usual, the $C^1$-norm on $\mbb{T}$ means
$$\|f\|_{C^1}=\sup\limits_{\theta}\(\left|f\(e^{i\theta}\)\right|
+\left|f'\(e^{i\theta}\)\right|\).
$$
\end{remark}

\begin{proof}
First, we note that it suffices to verify the theorem for
real-analytic diffeomorphisms $\Psi$ since the latter are dense in
the $C^1$-norm in the set of all diffeomorphisms of $\mbb{T}$. Let
$\Psi(\theta)=\exp(i\psi(\theta)),$ where the real-valued function
$\psi(\theta)$ is strictly monotone increasing and $\psi'>0$ is
$2\pi$-periodic. Of course, $\psi(\theta + 2\pi)=\psi(\theta)+2\pi$. To approximate $\psi(\theta)$ on $[0,2\pi]$ in $C^1$-norm by
arguments of functions in \eqref{nrootbp} it suffices to approximate
uniformly on $\mbb{T}$ a (positive) function
$\psi'$ such that  $\int_0^{2\pi}\psi' d\theta=2\pi$
by functions $$\frac1n\,\frac d{d\theta}\arg B\(e^{i\theta}\),$$
where
$B$ is a Blaschke product of degree $n$.
Note that a straightforward calculation yields
\begin{equation}
\label{eq2.10} \frac1{2\pi}\,\frac d{d\theta}\(\frac1n\arg
B\(e^{i\theta}\)\)
=\frac1{2\pi}\left\{\frac1n\sum_{j=1}^nP\(e^{i\theta},a_j\)\right\},
\end{equation}
where for
\begin{equation}
\label{eq2.11} z=re^{i\phi},\quad P\(e^{i\theta},z\)
:=\frac{1-r^2}{1+r^2-2r\cos(\theta-\phi)}
\end{equation}
denotes the Poisson kernel evaluated at $z$.

Following \cite{GSS}, we argue now as follows. Since
$\psi'$ is positive and real-analytic on $\mbb{T}$, we can
approximate it by a positive trigonometric polynomial
$h(\theta)=\sum\limits_{-N}^Na_ke^{ik\theta}>0$ maintaining the
normalization $\int_{\mbb{T}} h d\theta =2\pi$. Observe the
following:
\begin{enumerate}
\item[(i)]  We can consider $h(\theta)$ to be boundary values of a bounded positive harmonic function
 $H(r,\theta):=\sum\limits_{-N}^Na_kr^{-|k|}e^{ik\theta}$ in $\mbb{C}\smallsetminus\mbb{D}$ (in other words, replacing
 $z^k$, $\ol{z}^k$ by $\frac1{\ol{z}^k}$ and $\frac1{z^k}$ in the expansion of $h$ in terms of $z$, $\ol{z}$).

\item[(ii)]  $H(r,\theta)$ extends as a positive harmonic function to a slightly larger domain $\mbb{D}^R_-:=\{|z|>R,R<1\}$ ---
this is obvious in view of the continuity of $H$ and the compactness of $\bT$.
\end{enumerate}

Representing $h(\theta)=H\(e^{i\theta}\)|_{r=1}$ in $\mbb{D}^R_-$ via the Poisson integral of its boundary values on $\{|z|=R<1\}$
and taking into account the change in orientation we easily obtain
\begin{equation}
\label{eq2.12}
\begin{gathered}
h\(e^{i\theta}\)=H\(e^{i\theta}\)|_{r=1}
=\frac1{2\pi}\int_0^{2\pi}\frac{1-R^2}
{1+R^2-2R\cos(\theta-\varphi)}\,H\(Re^{i\varphi}\)d\varphi \\
=\int_{|z|=R, R<1}P\(e^{i\theta},z\)d\mu(z),
\end{gathered}
\end{equation}
where $\mu>0$ is a probability measure supported on a compact subset
of $\mbb{D}$, i.e., on the circle $\{|z|=R<1\}$. Every such measure
$\mu$ is a weak$^\ast$ limit of discrete atomic probability measures
with $n$ atoms, $n\to\infty$, having equal charges $1/n$ at these
atoms. This last observation together with \eqref{eq2.10} finishes
the proof of the assertion that $\psi'$, with
$\int\limits_0^{2\pi}\psi' d\theta=2\pi$, is uniformly approximable on $\bT$
by functions $$\frac1n\,\frac d{d\theta}\arg B\(e^{i\theta}\),$$ where
$B$ is a Blaschke product of degree $n$. The remaining part of the
theorem is easily derived from it, so we shall omit it.
\end{proof}

\section{Roots of Blaschke products as fingerprints}\Label{s:bpasfp}

In this section, we shall prove a converse to Theorem \ref{lemnifp}, which answers question (II) above:

\begin{theorem}\Label{bpasfp}
Let $B$ be a Blaschke product of degree $n$,
\begin{equation}\Label{bp2} B(z)=e^{i\theta}\prod_{j=1}^n\frac{z-a_j}{1-\ol{a}_jz},
\quad\left|a_j\right|<1.
\end{equation}
There there is a proper lemniscate $\Gamma\subset\bC$ of degree $n$ such that its fingerprint $k\colon \mathbb T\to \mathbb T$ is given by
\begin{equation}\Label{e:bpasfp}
k(z)=\sqrt[n]{B(z)}.
\end{equation}
If $\tilde \Gamma\subset\bC$ is any other $C^1$-smooth Jordan curve with the same fingerprint, then there is an affine linear transformation $T(z):=az+b$, with $a>0$ and $b\in \bC$, such that $\tilde \Gamma=T(\Gamma)$.
\end{theorem}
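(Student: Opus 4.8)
The plan is to prove existence and uniqueness separately, both resting on the same device: gluing two conformal maps across a Jordan curve, using the fingerprint identity to make the pieces match, so as to produce a global rational map.

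\textbf{Existence.} Put $g:=\sqrt[n]{B}$. Since $B'\neq 0$ on $\bT$ and $\arg B$ increases by $2\pi n$ around $\bT$ (cf.\ \eqref{eq2.10}--\eqref{eq2.11}), the map $g$ is a real-analytic, orientation-preserving diffeomorphism of $\bT$. By Theorem \ref{t:Kir} there is a smooth Jordan curve $\Gamma$ whose fingerprint is the class of $g$; let $\Omega_\pm$ be its interior and exterior and $\Phi_-\colon\bD\to\Omega_-$, $\Phi_+\colon\bD_+\to\Omega_+$ the normalized conformal maps, chosen (after composing $\Phi_-$ on the right with a M\"obius automorphism of $\bD$) so that $\Phi_+^{-1}\circ\Phi_-=g$ on $\bT$. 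Define $P\colon\hat\bC\to\hat\bC$ by $P:=B\circ\Phi_-^{-1}$ on $\overline{\Omega_-}$ and $P:=\bigl(\Phi_+^{-1}\bigr)^{n}$ on $\Omega_+$. The two formulas agree on $\Gamma$ \emph{exactly because} $\Phi_+^{-1}\circ\Phi_-=\sqrt[n]{B}$, so $P$ is continuous on $\hat\bC$ and holomorphic off the smooth curve $\Gamma$; by the standard removability of a $C^1$ arc, $P$ is holomorphic on $\hat\bC$, hence rational. Its only pole is at $\Phi_+^{-1}(\infty)=\infty$, of order exactly $n$ and with leading coefficient $\bigl(\Phi_+'(\infty)\bigr)^{-n}>0$, so $P$ is a polynomial of degree $n$ of the form \eqref{polydef}. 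Since $|B|<1$ on $\bD$ and $|\Phi_+^{-1}|>1$ on $\Omega_+$, we get $\{|P|<1\}=\Omega_-$, which is connected, and $\{|P|=1\}=\Gamma$; moreover $P'\neq 0$ on $\Gamma$, for otherwise the local expansion $P=P(w_0)+\gamma(w-w_0)^{m}+\cdots$ with $m\ge 2$ would force $\{|P|=1\}$ to branch at $w_0$, contradicting that $\Gamma$ is a simple smooth curve. Thus $\Gamma$ is a proper lemniscate of degree $n$, and by construction (cf.\ Theorem \ref{lemnifp}) its fingerprint is $\sqrt[n]{B}$. One may also bypass Theorem \ref{t:Kir}: glue $B$ on $\bD$ to $z\mapsto z^{n}$ on $\{|z|>2\}$ across the annulus $\{1<|z|<2\}$ by an interpolating orientation-preserving local diffeomorphism of modulus $>1$, obtaining a quasiregular degree-$n$ branched cover $F$ of $\hat\bC$ that is holomorphic off the annulus and has no critical points on $\bT$, and straighten the $F$-invariant conformal structure by the measurable Riemann mapping theorem to get $P:=F\circ\Psi^{-1}$ with $\Gamma:=\Psi(\bT)=\{|P|=1\}$.

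\textbf{Uniqueness.} Let $\tilde\Gamma$ be any $C^1$ Jordan curve with the same fingerprint as $\Gamma$, with interior $\tilde\Omega_-$, exterior $\tilde\Omega_+$, and normalized conformal maps $\tilde\Phi_\pm$; composing $\tilde\Phi_-$ with a M\"obius automorphism we may assume $\tilde\Phi_+^{-1}\circ\tilde\Phi_-=\sqrt[n]{B}$ as well. Set $G:=\Phi_-\circ\tilde\Phi_-^{-1}\colon\tilde\Omega_-\to\Omega_-$ and $\tilde G:=\Phi_+\circ\tilde\Phi_+^{-1}\colon\tilde\Omega_+\to\Omega_+$, both conformal; by Carath\'eodory each extends to a homeomorphism of closures carrying $\tilde\Gamma$ onto $\Gamma$. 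The key point is that $G$ and $\tilde G$ \emph{agree} on $\tilde\Gamma$: for $w\in\tilde\Gamma$, writing $z:=\tilde\Phi_-^{-1}(w)\in\bT$, we have $G(w)=\Phi_-(z)$, whereas $\tilde G(w)=\Phi_+\bigl(\tilde\Phi_+^{-1}(w)\bigr)=\Phi_+\bigl(\sqrt[n]{B(z)}\bigr)=\Phi_-(z)$, using $\tilde\Phi_+^{-1}\circ\tilde\Phi_-=\sqrt[n]{B}$ and $\Phi_+\circ\sqrt[n]{B}=\Phi_-$ on $\bT$. Hence $H$, defined as $G$ on $\overline{\tilde\Omega_-}$ and as $\tilde G$ on $\overline{\tilde\Omega_+}$, is a continuous bijection of $\hat\bC$, holomorphic off the $C^1$ curve $\tilde\Gamma$, so by removability it is a M\"obius transformation; since $H(\infty)=\Phi_+(\infty)=\infty$ it is affine, $H(z)=\alpha z+\beta$ with $\alpha=\Phi_+'(\infty)/\tilde\Phi_+'(\infty)>0$. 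As $H$ carries $\tilde\Gamma$ onto $\Gamma$, we conclude $\tilde\Gamma=H^{-1}(\Gamma)=T(\Gamma)$ with $T=H^{-1}$ affine and of positive leading coefficient.

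\textbf{Main obstacle.} The technical pivot, used twice, is the removability of a $C^1$ Jordan curve for functions continuous on a neighborhood and holomorphic off it: that is what forces $P$ to be a polynomial and $H$ to be M\"obius, and it is precisely where the smoothness hypothesis is spent (after which the conclusion automatically upgrades to real-analyticity). The conceptual pivot, in the uniqueness part, is that the two comparison maps $G$ and $\tilde G$ are \emph{compelled} to coincide on $\tilde\Gamma$ by the equality of fingerprints: this is what converts ``same fingerprint'' into ``same lemniscate up to an affine change of variable,'' and in effect re-derives the injectivity half of Kirillov's theorem inside the present, lemniscate-based framework.
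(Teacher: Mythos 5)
Your argument is correct, and the uniqueness half is essentially the paper's own proof: equality of fingerprints forces the interior and exterior comparison maps to agree on the curve, the glued map is conformal on $\hat\bC$ fixing $\infty$ with positive derivative there, hence affine. The existence half, however, takes a genuinely different route. You invoke Theorem \ref{t:Kir} to produce a smooth Jordan curve $\Gamma$ with fingerprint $\sqrt[n]{B}$ and then manufacture the polynomial by gluing $B\circ\Phi_-^{-1}$ on $\overline{\Omega_-}$ to $\bigl(\Phi_+^{-1}\bigr)^n$ on $\Omega_+$; the matching on $\Gamma$ is exactly the fingerprint identity, removability of the smooth curve makes $P$ entire with an order-$n$ pole at $\infty$, and your check that $P'\neq 0$ on $\Gamma$ (via the local branching of the level set) and that $\{|P|<1\}=\Omega_-$ is correct. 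This is shorter and cleaner than the paper's existence proof, but it is circular relative to the paper's stated program: the whole point of the article is to give a lemniscate-based route \emph{to} Kirillov's theorem, so the authors deliberately avoid using Theorem \ref{t:Kir} (equivalently, Ahlfors--Bers) for existence. Instead they run Koebe's continuity method on the finite-dimensional moduli spaces $\EL$ and $\EB$: the fingerprint map $\mathcal F\colon\EL\to\EB$ is continuous, factors through an injective map on $\EL'$, is open by Brouwer's invariance of domain, and is closed by a compactness argument tracking critical values through the finite $n^{n-1}$-to-$1$ map of \cite{BCN}; surjectivity then follows from connectedness of $\EB$. That longer route is what yields the Corollary counting equivalence classes of Blaschke products with prescribed critical values, which your argument does not produce. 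Your quasiconformal-surgery alternative is also essentially sound, with one correction: you do not want (and cannot cheaply get, given that the interpolating annulus overlaps its image under iteration) a dynamically $F$-\emph{invariant} structure; it suffices to take $\Psi$ with complex dilatation equal to that of $F$ (a single pullback, supported in the surgery annulus), which makes $P:=F\circ\Psi^{-1}$ holomorphic and makes $\Psi$ conformal on $\bD$, so that $P\circ(\Psi|_\bD)=B$ and Theorem \ref{lemnifp} identifies the fingerprint. But that route rests on the measurable Riemann mapping theorem, i.e., the same Ahlfors--Bers machinery underlying Theorem \ref{t:Kir}, so it does not restore the independence the paper is after.
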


\begin{proof} The uniqueness part of the theorem is of course a consequence of Theorem \ref{t:Kir}, but for the readers' convenience we shall reproduce the simple proof here. Suppose that two $C^1$-smooth Jordan curves $\Gamma,\tilde\Gamma\subset \bC$ have the same fingerprint, and let $\Phi_-,\tilde \Phi_-, \Phi_+, \tilde\Phi_+$ be the corresponding Riemann mappings $\bD_-\to \Omega_-$, $\bD_-\to \tilde \Omega_-$, $\bD_+\to  \Omega_+$, $\bD_+\to \tilde \Omega_+$, respectively (following the notation introduced above in an obvious way). Since both Jordan curves are assumed $C^1$-smooth, all Riemann mappings extend continuously and homeomorphically to the corresponding boundaries. (For this conclusion, weaker conditions than that of being $C^1$ suffice, but mere continuity does not; cf.\ \cite{Bi}).)
 The fact that the two fingerprints are equal means that $\Phi^{-1}_+\circ\Phi_-=\tilde \Phi^{-1}_+\circ\tilde \Phi_-$ on $\mathbb T=\partial \bD_-=\partial \bD_+$. This can be rewritten as $\tilde \Phi_+\circ\Phi^{-1}_+=\tilde\Phi_-\circ\Phi^{-1}_-$ on $\Gamma=\partial \Omega_-$. We conclude that the conformal mapping $\tilde \Phi_+\circ\Phi^{-1}_+\colon \Omega_+\to \tilde \Omega_+$ can be extended as a conformal mapping $T\colon \hat\bC\to \hat\bC$ by defining it as $\tilde\Phi_-\circ\Phi^{-1}_-$ in $\overline{\Omega}_-$. Since $T(\infty)=\infty$ and $T'(\infty)>0$, we conclude that $T(z)=az+b$ with $a>0$ and $b\in \bC$. This proves the uniqueness modulo affine linear transformations of the type described in the theorem.

To prove the existence part of the theorem, we shall consider a modification of the map $\mathcal F$ defined in the introduction in the setting of lemniscates and Blaschke products. A proper lemniscate $\Gamma$ of degree $n$ is the set of points $z\in \bC$ that satisfy $|P(z)|=1$, where $P$ is a polynomial in $z$ of degree $n$ whose highest order coefficient is positive (see \eqref{polydef}) and all of whose critical values belong to $\bD$ (see Proposition \eqref{propcrit}). It follows immediately from \eqref{nroot} that the polynomial $P$ is uniquely determined by $\Gamma$. We shall let $\mathcal L$ denote the subset of $\bR\times\bC\times\ldots\times\bC$ (with $n$ factors of $\bC$) that, under the map
\begin{equation}\Label{coeffmap}
(a_n,a_{n-1},\ldots, a_0)\mapsto P(z):=a_nz^n+\ldots+a_0,
\end{equation}
yields polynomials whose lemniscates are proper of degree $n$. Clearly, $\mathcal L$ is open. One can also easily prove that $\mathcal L$ is connected by using Proposition \ref{propcrit} in the following way. Let $P$ be a polynomial of degree $n$ corresponding to a point in $\mathcal L$ and denote by $\Gamma$ the corresponding proper lemniscate of degree $n$. By Proposition \ref{propcrit}, the critical values of $P$ all have modulus less than one. Consider the lemniscates $\Gamma_R$ defined by $|P(z)|=R$, or equivalently by $|P_R(z)|=1$ where $P_R(z):=P(z)/R$, for $R\geq 1$. Clearly, the critical values of $P_R$ all belong to the open disk of radius $1/R\leq1$ and hence the lemniscates $\Gamma_R$
are proper lemniscates of degree $n$. Now, pick $r>0$ such that
\begin{equation}\Label{conn1}
na_n|z|^{n-1}-\left ((n-1) |a_{n-1}|z|^{n-2}+(n-2)|a_{n-2}||z|^{n-3}+\ldots+|a_1|\right)>0, \quad \forall |z|\geq r,
\end{equation}
 and then pick $R>0$ such that
\begin{equation}\Label{conn2}
\frac{1}{R}\left(a_n|z|^n+|a_{n-1}||z^{n-1}|+\ldots+|a_0|\right)\leq \frac {1}{2},\quad \forall |z|\leq r.
\end{equation}
Finally, consider the lemniscates $\Gamma_R^t$ defined by the polynomials
$$
P_R^t(z):=\frac{1}{R}\left(a_nz^n+t(a_{n-1}z^{n-1}+\ldots +a_0)\right ), \quad 0\leq t\leq 1.
$$
It follows immediately from \eqref{conn1} and \eqref{conn2} that the critical values of $P_R^t$, for $0\leq t\leq 1$, have modulus less than 1 and, hence, the $\Gamma_R^t$ are all proper lemniscates of degree $n$. Note that $\Gamma_R^0$ is a circle. Thus, we conclude that any proper lemniscate $\Gamma$ of degree $n$ can be deformed through proper lemniscates of degree $n$ to a circle. It follows that the open subset $\mathcal L$ is connected.

As explained in the introduction, the fingerprint of a shape remains unchanged under translations and scaling. In other words, given a proper lemniscate $\Gamma=\{z\colon |P(z)|=1\}$ of degree $n$ with fingerprint $k\colon \bT\to\bT$, let $\Gamma_{{ab}}$ be the image of $\Gamma$ under the inverse of the affine linear transformation $T_{ab}(z):=az+b$, with $a>0$ and $b\in \bR$, and let $k_{ab}$ denote its fingerprint. Then, we have $k=k_{ab}$. We shall consider the space of equivalence classes $\{\Gamma_{ab}\}_{a>0,\ b\in\bC}$ of proper lemniscates $\Gamma$ of degree $n$ under this action of the group $\mathcal G:=\{T(z)=az+b\colon a>0,\ b\in \bC\}$. Note that if $\Gamma$ is defined by $|P(z)|=1$, then $\Gamma_{ab}$ is defined by $|P_{ab}(z)|=1$, where
\begin{equation}\Label{actG}
\begin{aligned}
P_{ab}(z):=P(az+b) &=a_n(az+b)^n+\ldots+a_0\\
&= a^n a_nz^n+(na^nb+a_{n-1})z^{n-1}+\ldots+P(b).
\end{aligned}
\end{equation}
Thus, in each equivalence class there is a unique polynomial of the form
\eqref{polydef} with $a_n=1/n$ and $a_{n-1}=0$. In other words, we can parametrize the space of equivalence classes of proper lemniscates of degree $n$ by a subset  $\EL\subset \bC^{n-1}$ under the identification
\begin{equation}\Label{ELmap}
(a_{0},\ldots,a_{n-2})\mapsto P(z)=\frac{1}{n}z^n+a_{n-2}z^{n-2}+\ldots+a_0.
\end{equation}
The subset $\EL\subset\bC^{n-1}$ is clearly open and connected by the same arguments as above. We also note that there is a finite, $n$-to-1, holomorphic (polynomial) mapping $\Lambda\colon\bC^{n-1}\to \bC^{n-1}$ induced by the action of $\mathcal G$ and defined as follows. For $(\tilde a_1,\ldots,\tilde a_{n-1})\in\bC^{n-1}$, consider the polynomial
\begin{equation}\Label{Lambdamap1}
\tilde P(z):=\frac{1}{n}z^n+\tilde a_{n-1}z^{n-1}+\ldots+ \tilde a_1z.
\end{equation}
The image $\Lambda(\tilde a_1,\ldots,\tilde a_{n-1})$ are the coefficients $(a_0,\ldots, a_{n-2})$ of the unique polynomial of the form
\begin{equation}\Label{Lambdamap2}
P(z)=\frac{1}{n}z^n+a_{n-2}z^{n-2}+\ldots+a_0
\end{equation}
in the equivalence class of $\tilde P$ under the action of $\mathcal G$ described in \eqref{actG}. (The reader may want to write down this map explicitly.) Let $\widetilde{\EL}$ denote the inverse image of $\EL$ under this map. For future reference, we remind the reader of the following well-known property, which will be used below, of a finite holomorphic mapping (or a branched covering) $H$ from one complex manifold $X$ to another $Y$ (endowed with some metrics). {\it The inverse images of $H$ in $X$ depend continuously on the values in $Y$} in the following sense: Let $w_0\in Y$ and let $H^{-1}(w_0)$ denote the (finite) set of inverse images. For any $\epsilon>0$, there exists $\delta>0$ such that if $w$ belongs to a $\delta$-ball centered at $w_0$, then the set of inverse images $H^{-1}(w)$ belongs to the union of $\epsilon$-balls centered at the points of $H^{-1}(w_0)$. We shall say that $H^{-1}(w)$ converges to $H^{-1}(w_0)$ as a set.

Next, consider the collection of Blaschke products of degree $n$,
\begin{equation}\Label{BP1}
B(z)=\lambda\prod_{k=1}^n\frac{z-b_k}{1-\bar b_k z},\quad b_k\in\bD,\quad \lambda\in \bT.
\end{equation}
Let $\mathcal M$ be the M\"obius group consisting of automorphisms of the unit disk
$$
\phi(z):=\lambda\frac{z-b}{1-\bar b z}, \quad |\lambda|=1,\quad b\in \bD,
$$
acting on Blaschke products by right composition. (Recall from the introduction that the fingerprint of a shape is only defined modulo this action on orientation preserving diffeomorphisms of $\bT$.) It is readily seen that each Blaschke product can be brought to one of the form
\begin{equation}\Label{eqbp}
B(z)=z\prod_{k=1}^{n-1}\frac{z-b_k}{1-\bar b_k z},\quad b_k\in\bD,
\end{equation}
by the action of $\mathcal M$. Also, each equivalence class of Blaschke products under this action contains a finite number ($n$ generically) of Blaschke products of this form. The Blaschke product in \eqref{eqbp} is of course invariant under permutations of the roots $(b_1,\ldots,b_{n-1})$, and hence the set of such Blaschke products  can be identified with the image $\mathcal B$ of $\bD^{n-1}:=\bD\times\ldots\times\bD\subset \bC^{n-1}$ under the finite holomorphic mapping $\bC^{n-1}\to\bC^{n-1}$
\begin{equation}\Label{rootmap}
(b_1,\ldots,b_{n-1})\mapsto (S_1(b),S_2(b),\ldots, S_{n-1}(b)),
\end{equation}
where $S_j(b)$ denotes the $j$th symmetric function on $n-1$ elements:
$$
\prod_{j=1}^{n-1}(z-b_j)=z^{n-1}+S_1(b)z^{n-2}+\ldots+S_{n-2}(b)z+S_{n-1}(b).
$$
Now, let $\EB$ denote the set of equivalence classes of Blaschke products under the action of $\mathcal M$, and let $\pi$ denote the projection of $\mathcal B$ onto $\EB$. Since the action of $\mathcal M$ on $\mathcal B\subset \bC^{n-1}$ is algebraic with only finitely many points in each equivalence class, $\EB$ is an algebraic variety (quotient singularity) of dimension $n-1$. Moreover, being the  image under successive continuous mappings (the finite mapping \eqref{rootmap} followed by $\pi$) of the connected space $\bD^{n-1}$, the space $\EB$ is connected.

By the discussion in Section \ref{s:lemfp}, we obtain a map $\mathcal F\colon \EL\to\EB$ as follows. For an element $e=(a_0,\ldots,a_{n-2})\in\EL$, let $P$ denote the corresponding polynomial $\eqref{ELmap}$, $\Gamma=\{z\in \bC\colon |P(z)|=1\}$ its proper lemniscate, $\Phi_{-}\colon \bD\to\Omega_-$ a Riemann map, and $B:=P\circ\Phi_-$ the corresponding Blaschke product (so that the fingerprint of $\Gamma$ is the $n$th root of $B$ by Theorem \ref{lemnifp}). We define $\mathcal F(e):=\pi(B)$. If we choose another Riemann map, we obtain another Blaschke product in the same equivalence class (and hence $\mathcal F$ is well defined). Also, any Blaschke product in this equivalence class can be produced by choosing a suitable Riemann map. Thus, to finish the existence part of Theorem \ref{bpasfp} it suffices to show that $\mathcal F$ is surjective. We first claim that $\mathcal F$ is continuous. To see this, let $\{e_k\}$ (notation as above) be a sequence of points in $\EL$ converging to $e_0$. Let $P_k, P_0$ be the corresponding polynomials and $\Gamma_k,\Gamma_0$ their lemniscates with interiors $\Omega_k,\Omega_0$. (Since we shall not need the exteriors in this argument, we shall omit the subscript "-" on the interiors and Riemann maps.) If we let $K$ be a closed disk that contains $\overline{\Omega}_0$ in its interior, then $P_k\to P_0$ uniformly on $K$ and, hence, $\Gamma_k\to \Gamma_0$ in the Hausdorff metric \eqref{Hausmetric}. Let us fix a $w\in \Omega_0$. Then, $w\in \Omega_k$ for $k$ sufficiently large. Now, let $\Phi_0\colon \bD\to \Omega_0$, $\Phi_k\colon \bD\to \Omega_k$ be Riemann mappings normalized by $\Phi_0(0)=\Phi_k(0)=w$ and $\Phi'_0(0)>0,\Phi'_k(0)>0$. By
a well-known theorem of Carath\'{e}odory (cf.\ \cite[Ch.\ II, Sec.\
5]{Go}) the Riemann mappings $\Phi_k:\mbb{D}\to\Omega_k$ converge
uniformly in $\mbb{D}$ to the Riemann mapping $\Phi_0:\mbb{D}\to\Omega_0$. Observe that the mapping taking the $n$ roots $(\xi_1,\ldots,\xi_n)$ of a monic polynomial to its $n$ coefficients $(a_0,\ldots,a_{n-1})$ is a finite holomorphic mapping (indeed, given by \eqref{rootmap} above modulo notation). Hence, the roots, as a set, depend continuously on the coefficients (in the sense explained above). It follows that the roots of $P_k$ converge to the roots of $P_0$ (again, as sets). We conclude that $B_k^{-1}(0)=(P_k\circ\Phi_k)^{-1}(0)$ converge to $B_0^{-1}(0)=(P_0\circ\Phi_0)^{-1}(0)$. This means that $B_k\to B_0$ in $\mathcal B$ and, hence, $\mathcal F(e_k)=\pi(B_k)\to \mathcal F(e_0)=\pi(B_0)$ in $\EB$. This proves that $\mathcal F$ is continuous.

Next, we observe that the map $\mathcal F$ is not injective. Indeed, the map as described above produces for each element $e\in \EL$ an equivalence class of a Blaschke product $B$ such that $B=k^n$, where $k$ is the finger print of the lemniscate $\Gamma$ associated to the point in $\EL$. We proved above that the map taking the point $e\in \EL$ to its fingerprint is injective. Thus, two lemniscates $\Gamma_1$ and $\Gamma_2$ corresponding to two points in $\EL$ will produce the same Blaschke product precisely when their fingerprints satisfy $k_2=\epsilon k_1$, where $\epsilon$ is a root of unity: $\epsilon^n=1$. Now, it is easy (and left to the reader) to verify that if $T(z)=\lambda z$, for some $|\lambda|=1$, then, for any shape $\Gamma$ with fingerprint $k$, the fingerprint of $T(\Gamma)$ is $\lambda k$. Let us introduce an equivalence relation on $\EL$ where two elements $e_1$ and $e_2$ are equivalent when their corresponding lemniscates $\Gamma_1$ and $\Gamma_2$ are related by $\Gamma_2=T(\Gamma_1)$ for some $T(z)=\epsilon z$ with $\epsilon^n=1$. If we let $\EL'$ denote the set of equivalence classes of elements in $\EL$, then by the comments above the map $\mathcal F$ factors as the map $\EL\to \EL'$ and an injective map $\mathcal F'\colon \EL'\to \EB$. As in the case of $\EB$ above, the set $\EL'$ is an algebraic variety of dimension $n-1$. The map $\mathcal F'\colon \EL'\to\EB$ is  continuous. To prove that $\mathcal F$ is surjective, we shall employ Koebe's continuity method based on Brouwer's ``invariance of a domain'' theorem (cf. \cite[Ch.\ 5,
Sec.\ 6]{Go}, see also \cite{Be}). Since $\mathcal F'$ is continuous and injective, Brouwer's theorem implies that
$\mc{F}'$ maps $\EL'$ homeomorphically onto an open subset of
$\EB$. Since the image $\mathcal F(\EL)$ clearly equals the image $\mathcal F'(\EL')$, we conclude that $\mathcal F(\EL)$ is an open subset of $\EB$. Since $\EB$ is connected as explained above, to prove that $\mathcal F$ is surjective it suffices to show that the image $\mathcal F(\EL)$ is closed in $\EB$.

We first note that a Blaschke product $B$ of degree $n$ is an $n$-to-1 branched covering of $\bD$ by itself. Thus, by the Riemann-Hurwitz formula as in the proof of Proposition \ref{propcrit}, $B$ has $n-1$ critical points (counted with multiplicity) in $\bD$. (This can also be easily seen by the argument principle, computing the change in argument of $B'(z)$ as $z$ traverses $\bT$ by noting that $B(z)$ circles $n$ times around $\bT$ as $z$ traverses $\bT$ once.) For a Blaschke product $B$ of the form \eqref{eqbp}, the critical values $c_1,\ldots,c_{n-1}$ are of course obtained by solving the equation $B'(z)=0$ and, hence, the critical values depend continuously (as sets) on the roots $b_1,\ldots, b_{n-1}$. Consequently, the critical values $w_1,\ldots, w_{n-1}\in \bD$ depend continuously on $b_1,\ldots,b_{n-1}$. Also, if $B=P\circ \Phi$ for some polynomial $P$ and conformal mapping $\Phi$, then clearly the critical values of $B$ and $P$ are the same. Moreover, the critical values of any two Blaschke products in the same equivalence class in $\EB$ are the same. It follows that the critical values of a polynomial corresponding to $e\in \EL$ and any representative of $\mathcal F(e)$ are the same.

Now, let $\{f_k\}$  be a sequence in $\mathcal F(\EL)$ converging to $f_0\in\EB$. We will show that $f_0=\mathcal F(e_0)$ for some $e_0\in \EL$, which will complete the proof. Let $e_k:=\mathcal F^{-1}(f_k)\in \EL$ and let $P_k$ denote the corresponding polynomials under the identification \eqref{ELmap}. In what follows, we shall abuse the notation and not distinguish between an element $e$ of coefficients and its corresponding polynomial $P$. Now, let $B_0$ be a Blaschke product in the equivalence class $f_0$ and let $w^{(0)}_1,\ldots,w^{(0)}_{n-1}$ denote the critical values (counted with multiplicity) of $B(z)$. Note that the critical values are independent of the choice of $B_0$. Let us choose $B_0\in\mathcal B$. Similarly, let
$w^{(k)}_1,\ldots,w^{(k)}_{n-1}$ denote the critical values of some (any) choice of Blaschke product $B_k\in \mathcal B$ in the equivalence class $f_k$. Since the map $\pi\colon \mathcal B\to \EB$ is an $n$-to-1 branched covering (each equivalence class in $\EB$ contains at most, and generically, $n$ distinct Blaschke products of the form \eqref{eqbp} as mentioned above), we may choose the $B_k\in \mathcal B$ such that $B_k\to B_0$ in $\mathcal B$. Since the critical values depend continuously on the roots of the Blaschke product, we can order the critical values $w^{(k)}_1,\ldots,w^{(k)}_{n-1}$ of $B_k$ in such a way that $w^{(k)}_j\to w^{(0)}_j$ as $k\to \infty$ for each $j$. (For instance, for each $k$ we can choose an ordering that minimizes the sum of the distances $|w^{(k)}_j-w^{(0)}_j|$.)

Now, recall the $n$-to-1 holomorphic mapping $\Lambda\colon \widetilde{\EL}\to \EL$, where $\widetilde \EL$ denote the polynomials of the form \eqref{Lambdamap1} whose critical values all belong to $\bD$. Note that each polynomial $\tilde P(z)$ of the form \eqref{Lambdamap1} is uniquely determined by the conditions that $\tilde P(0)=0$ and $\tilde P'(z)$ is monic. The map
\begin{equation}\Label{cptopoly}
(\zeta_1,\ldots,\zeta_{n-1})\mapsto \tilde P(z):=\int_{0}^z\left(\prod_{k=1}^{n-1}(z-\zeta_k)\right)dz
\end{equation}
is an $(n-1)!$-to-1 holomorphic map of $\bC^{n-1}$ onto the space of polynomials of the form \eqref{Lambdamap1}. By Theorem 1.2 in \cite{BCN}, the map sending the critical points of $\tilde P$ given by \eqref{cptopoly} to its critical values, i.e.\
\begin{equation}\Label{BCNmap}
\Psi(\zeta_1,\ldots,\zeta_{n-1}):= (\tilde P(\zeta_1),\ldots,\tilde P(\zeta_{n-1})),
\end{equation}
is a finite $n^{n-1}$-to-1 holomorphic mapping. Choose $\tilde P_k\in \widetilde{\EL}$ such that $\Lambda(\tilde P_k)=P_k$. Since $(\mathcal F\circ\Lambda)(\tilde P_k)=\mathcal F(P_k)=f_k$, it follows that the critical values of $\tilde P_k$ are $w^{(k)}_1,\ldots,w^{(k)}_{n-1}$. Thus, we can choose a sequence $\zeta_k=(\zeta^{(k)}_1,\ldots, \zeta^{(k)}_{n-1})\in \bC^{n-1}$ such that $\tilde P_k$ is given by the map in \eqref{cptopoly} and $\Psi(\zeta_k)=w_k:=(w^{(k)}_1,\ldots,w^{(k)}_{n-1})$. Now, choose $\zeta_0\in \bC^{n-1}$  such that $\Psi(\zeta_0)=w_0:=(w^{(0)}_1,\ldots,w^{(0)}_{n-1})$. Since $w_k\to w_0$ as $k\to \infty$, it follows that the the inverse images $\Psi^{-1}(w_k)$ converge to $\Psi^{-1}(w_0)$ as sets. Since $\zeta_k\in \Psi^{-1}(w_k)$, $\zeta_0\in \Psi^{-1}(w_0)$, and any $\Psi^{-1}(w)$ (in particular, $\Psi^{-1}(w_0)$) contains at most $n^{n-1}$ distinct preimages, it follows from the pigeon hole principle that there is a subsequence $\zeta_{k_j}$ that converges to $\zeta_0$. If we now let $\tilde P_0$ denote the image of $\zeta_0$ in $\widetilde \EL$ under the map \eqref{cptopoly}, then $\tilde P_{k_j}\to \tilde P_0$. Thus, if we denote by $P_0=\Lambda(\tilde P_0)\in\EL$, then $\mathcal F(P_k)=(\mathcal F\circ \Lambda)(\tilde P_{k_j})\to \mathcal F(P_0)$ as $j\to\infty$. Also, we have $\mathcal F(P_k)\to e_0$ and, hence, $\mathcal F(P_0)=e_0$ proving that $e_0$ belongs to the image of $\mathcal F$. We conclude that the image is closed. Since the image is also open and $\EB$ is connected, we conclude that $\mathcal F$ is surjective, which completes the proof.
\end{proof}

We would like to revisit the main idea in the proof above of the existence of a lemniscate with a prescribed $n$th root of a Blaschke product $B$ as its fingerprint. It is well known, and easily seen by the Riemann-Hurwitz formula or by noting that $B'(z)dz$ changes its argument by $2\pi n$ as $z$ traverses the unit circle, that $B$ has $n-1$ critical points $z_1,\ldots, z_{n-1}$ counted with multiplicities inside the unit disk. Let $w_j:=B(z_j)$, for $j=1,\ldots, n-1$, denote the critical values of $B$ in $\bD$. The main idea in the existence proof above is to look for a candidate of a lemniscate whose fingerprint could be $k=\sqrt[n]{B}$ among those given by $\Gamma:=\{z\in \bC\colon |P(z)|=1\}$, where the $P(z)$ are polynomials of degree $n$ whose critical values are $w_1,\ldots, w_{n-1}\in \bD$. We know that the number of equivalence classes of such lemniscates is finite and, by the uniqueness part in Theorem \ref{bpasfp} already proved, the map sending these equivalence classes to their fingerprints $k$ is injective. Thus, if we could show that the number of equivalence classes of Blaschke products with a given set of critical values $w_1,\ldots, w_{n-1}\in \bD$ is the same as (or at least does not exceed) the number of equivalence classes of polynomials with this set of critical values, then the map would be a bijection, which would complete the proof of the existence. We have, however, been unable to find a direct proof of this statement and unable to find it in the existing literature. To circumvent this obstacle, we instead use Koebe's continuity method as described in the proof above. As a biproduct of this alternative completetion of the proof, we are able {\it a posteriori} to get an accurate count of the number of equivalence class of Blaschke products with a given set of critical values in the following way.

We shall use the notation introduced in the proof of Theorem \ref{bpasfp}. 
If $w_1,\ldots, w_{n-1}$ are values in $\bD$, not necessarily distinct, then by Theorem 1.2 in \cite{BCN}  there are $n^{n-1}$ points $\zeta^1,\zeta^2,\ldots,\zeta^{n^{n-1}}\in\bC^{n-1}$, repeated according to multiplicity, such that $\Psi(\zeta^j)=w:=(w_1,\ldots,w_{n-1})$, where $\Psi$ is defined by \eqref{BCNmap}. Hence, if we disregard the ordering of the components of $\zeta^{j}=(\zeta^j_1,\ldots,\zeta^j_{n-1})$ and $w=(w_1,\ldots,w_{n-1})$, we conclude that there are $n^{n-1}$ polynomials $\tilde P$ of the form \eqref{Lambdamap1} in $\widetilde{\EL}$, again counted with multiplicity, whose critical values are $w_1,\ldots,w_{n-1}$. Since the map $\Lambda\colon \widetilde{\EL}\to\EL$ is $n$-to-1, there are $n^{n-2}$ polynomials in $\EL$ with critical values $w_1,\ldots,w_{n-1}$. Next, we observe that when $n=2$, the set of equivalence classes $\EL'$ introduced in the proof above coincides with $\EL$, but for $n\geq 3$ there are, generically, $n$ elements of $\EL$ in each equivalence class. Thus, the map $\mathcal F\colon \EL\to \EB$ is 1-to-1 when $n=2$, but $n$-to-1 for $n\geq 3$. We therefore obtain the following result, which seems to be of independent interest.

\begin{corollary} For $n\geq 3$ and any collection $w_1,\ldots,w_{n-1}$ of values in $\bD$, there are $n^{n-3}$ equivalence classes (counted with multiplicities) of Blaschke products in $\EB$ whose critical values (in $\bD$) are $w_1,\ldots,w_{n-1}$. For $n=2$, there is one equivalence class.
\end{corollary}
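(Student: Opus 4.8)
The plan is to transport the count along the chain of finite holomorphic maps $\widetilde{\EL}\to\EL\to\EL'\to\EB$ built in the proof of Theorem \ref{bpasfp}, where the first arrow is $\Lambda$, the second is the quotient by the $\mathbb{Z}/n\mathbb{Z}$-action used to define $\EL'$, and the third is $\mathcal F'$. The key facts I would use are that a finite holomorphic map has a well-defined degree (the number of preimages of any point, counted with multiplicity) and that these degrees multiply along compositions. Fix a point $w=(w_1,\dots,w_{n-1})\in\bD^{n-1}$ throughout.

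First I would count the polynomials $\tilde P$ of the form \eqref{Lambdamap1} whose unordered collection of critical values is $w_1,\dots,w_{n-1}$. By Theorem 1.2 of \cite{BCN}, the map $\Psi$ of \eqref{BCNmap} is finite of degree $n^{n-1}$. Since the polynomial attached to $\zeta$ by \eqref{cptopoly} depends only on the multiset $\{\zeta_1,\dots,\zeta_{n-1}\}$ (which it recovers as the roots of the monic polynomial $\tilde P'$), and since we only care about $w$ as a multiset, I would pass to the $(n-1)$-fold symmetric product of $\bC$: by $S_{n-1}$-equivariance, $\Psi$ descends to a finite holomorphic self-map $\overline\Psi$ of $\mathrm{Sym}^{n-1}\bC$, and comparing degrees with the $(n-1)!$-to-$1$ symmetrization maps on source and target gives $\deg\overline\Psi=n^{n-1}$ as well. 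Hence there are exactly $n^{n-1}$ polynomials $\tilde P$ in $\widetilde{\EL}$, counted with multiplicity, whose critical values are $w_1,\dots,w_{n-1}$.

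Next I would push this count down to $\EL$ and then to $\EL'$. The action of $\mathcal G$ by $P(z)\mapsto P(az+b)$ leaves critical values unchanged — a critical point $z_0$ of $P$ corresponds to the critical point $(z_0-b)/a$ of $P(az+b)$, with the same critical value $P(z_0)$ — so the $n$-to-$1$ map $\Lambda\colon\widetilde{\EL}\to\EL$ carries the $n^{n-1}$ polynomials just found onto $n^{n-2}$ polynomials in $\EL$ with critical values $w_1,\dots,w_{n-1}$, again counted with multiplicity. Now $\EL'$ is the quotient of $\EL$ by the group $\{\Gamma\mapsto\epsilon\Gamma:\epsilon^n=1\}\cong\mathbb{Z}/n\mathbb{Z}$, which on the coordinates of \eqref{ELmap} acts by $(a_0,a_1,\dots,a_{n-2})\mapsto(a_0,\epsilon^{-1}a_1,\dots,\epsilon^{-(n-2)}a_{n-2})$ and again preserves critical values. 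For $n=2$ this action is trivial, so $\EL'=\EL$ and the number of lemniscate classes with critical value $w_1$ is $n^{n-2}=1$. For $n\ge3$ the action is generically free — any nontrivial $\epsilon$ has $\epsilon\ne1$, so $\epsilon^{-1}a_1=a_1$ forces $a_1=0$ — hence the $n^{n-2}$ polynomials split into orbits of generic size $n$, yielding $n^{n-3}$ equivalence classes in $\EL'$ with critical values $w_1,\dots,w_{n-1}$, counted with multiplicity.

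Finally I would transfer the count to $\EB$. By the proof of Theorem \ref{bpasfp}, $\mathcal F'\colon\EL'\to\EB$ is a continuous bijection, and it preserves critical values, since the critical values of a polynomial $P$ coincide with those of the Blaschke product $B=P\circ\Phi_-$ (as $\Phi_-$ is biholomorphic) and agree for all Blaschke products in a given class of $\EB$. Hence $\mathcal F'$ restricts to a bijection between the equivalence classes in $\EL'$ with critical values $w_1,\dots,w_{n-1}$ and the equivalence classes of Blaschke products in $\EB$ with those same critical values, and the previous step gives the asserted counts: $n^{n-3}$ for $n\ge3$ and $1$ for $n=2$. The step I expect to be the main obstacle is the multiplicity bookkeeping through the two quotients in the argument — passing from ordered to unordered critical data, and the $\mathbb{Z}/n\mathbb{Z}$-quotient of $\EL$ — at the non-generic configurations where orbits degenerate; the safe way around this is to establish the count first for a generic $w\in\bD^{n-1}$, where every map in the chain is an honest covering of its stated degree, and then to invoke the properness of the chain of finite holomorphic maps to conclude that the count, taken with multiplicity, is constant in $w$.
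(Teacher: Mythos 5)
Your proposal is correct and follows essentially the same route as the paper: the count $n^{n-1}$ from Theorem 1.2 of \cite{BCN} for polynomials in $\widetilde{\EL}$, division by $n$ through $\Lambda$, division by the generically free $\mathbb{Z}/n\mathbb{Z}$-action defining $\EL'$ when $n\geq 3$, and transfer to $\EB$ via the critical-value-preserving bijection $\mathcal F'$. The only additions are welcome elaborations of points the paper leaves implicit (the symmetric-product degree comparison and the explicit coefficient action $a_j\mapsto \epsilon^{-j}a_j$).
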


\section{Further remarks}

In this section, we collect some further remarks and observations regarding lemniscates and their fingerprints. We begin by proving the following rigidity results, which can be used to give an alternative proof of the uniqueness part (in the context of lemniscates) in Theorem \ref{bpasfp} but also seems to be of independent interest.

\begin{proposition}\Label{prop3.1}
Let $\Omega_-^{(1)}$, $\Omega_-^{(2)}$ be two domains
 bounded by proper lemniscates of degree $n$ and defined
by the equations $|P(z)|<1$, $|Q(z)|<1$, respectively, where $P$, $Q$ are of the
polynomials of degree $n$. Let $\left\{a_j^{(1)}\right\}_{j=1}^n$,
$\left\{a_j^{(2)}\right\}_{j=1}^n$ denote the respective nodes of
the lemniscates, i.e., the roots of $P$ and $Q$.
If $F:\Omega_-^{(2)}\to\Omega_-^{(1)}$ is a conformal map that maps the nodes of $\Omega_-^{(2)}$ onto the nodes of $\Omega_-^{(1)}$, then $F$ is an affine automorphism $F(w)=aw+b$ with $a,b\in \bC$.
\end{proposition}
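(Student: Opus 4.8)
The plan is to show that $F$, a conformal map between the two lemniscate interiors carrying nodes to nodes, must extend to a conformal automorphism of $\hat{\bC}$, and then use the normalization of $P$ and $Q$ to pin it down as affine. The starting observation is that, by the analysis leading to \eqref{polybp}, if $\Phi_1\colon \bD\to \Omega_-^{(1)}$ and $\Phi_2\colon \bD\to \Omega_-^{(2)}$ are Riemann maps, then $P\circ\Phi_1$ and $Q\circ\Phi_2$ are finite Blaschke products of degree $n$ whose zeros are exactly the $\Phi_j^{-1}$-preimages of the respective nodes. Composing, $G:=\Phi_1^{-1}\circ F\circ \Phi_2$ is a conformal automorphism of $\bD$ (a M\"obius map of the disk) that carries the zero set of the Blaschke product $B_2:=Q\circ\Phi_2$ onto the zero set of $B_1:=P\circ\Phi_1$. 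The key point is then that $B_1\circ G$ and $B_2$ are two degree-$n$ Blaschke products with the \emph{same} zeros (with multiplicity), hence differ by a unimodular constant: $B_1\circ G=e^{i\alpha}B_2$. Equivalently, $(P\circ F)\circ\Phi_2 = e^{i\alpha}\,(Q\circ\Phi_2)$ on $\bD$, and since $\Phi_2$ is onto $\Omega_-^{(2)}$ we get
\begin{equation}\Label{PF-eq}
P\circ F = e^{i\alpha}\,Q \quad\text{on } \Omega_-^{(2)}.
\end{equation}

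Next I would use \eqref{PF-eq} to extend $F$ across the boundary. Since $\Gamma^{(2)}=\partial\Omega_-^{(2)}=\{|Q|=1\}$ is a proper (hence smooth) lemniscate and $\Gamma^{(1)}=\{|P|=1\}$, relation \eqref{PF-eq} forces $F$ (which already extends continuously to the $C^1$ boundary) to map $\Gamma^{(2)}$ onto $\Gamma^{(1)}$. Now consider the exteriors. By \eqref{nroot}, the exterior Riemann maps satisfy $\Phi_{+,1}^{-1}=\sqrt[n]{P}$ on $\Omega_+^{(1)}$ and $\Phi_{+,2}^{-1}=\sqrt[n]{Q}$ on $\Omega_+^{(2)}$ (suitable branches). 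From \eqref{PF-eq} one reads off that the map $\Phi_{+,1}\circ\big(e^{i\alpha/n}\Phi_{+,2}^{-1}\big)$, defined on $\Omega_+^{(2)}$, agrees with $F$ on $\Gamma^{(2)}$; by the identity theorem / reflection (or simply because two holomorphic maps agreeing on a boundary arc agree), this gives a conformal extension of $F$ to all of $\hat{\bC}\setminus\{\text{finitely many points}\}$, in fact to $\hat{\bC}$, with $F(\infty)=\infty$. Thus $F$ is a M\"obius transformation fixing $\infty$, i.e.\ $F(w)=aw+b$. (Alternatively, one can avoid branch-of-root bookkeeping by noting directly that $F$ glues the interior map to the conformal map of exteriors induced by matching the normalized exterior uniformizers, exactly as in the uniqueness argument of Theorem \ref{bpasfp}.)

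The main obstacle I anticipate is the gluing/extension step: one must be sure that the interior conformal map $F$ and the candidate exterior conformal map match up \emph{as a single analytic function} across $\Gamma^{(2)}$, not merely continuously; this requires knowing that $\Gamma^{(2)}$ is a genuine smooth Jordan curve (guaranteed by properness, $Q'\neq 0$ on $\Gamma^{(2)}$) so that the Schwarz reflection principle applies and the two pieces analytically continue one another. Once $F$ is known to be an entire automorphism of $\hat{\bC}$ fixing $\infty$, the conclusion $F(w)=aw+b$ is immediate, and \eqref{PF-eq} then records the compatibility $P(aw+b)=e^{i\alpha}Q(w)$ between the two defining polynomials — note that, unlike in Theorem \ref{bpasfp}, here we do not claim $a>0$, since we have not normalized the leading coefficients, which is why the statement only asserts $a,b\in\bC$.
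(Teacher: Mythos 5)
Your argument is correct, but after the first step it takes a genuinely different route from the paper's. The opening move coincides in substance: you derive $P\circ F=cQ$ on $\Omega_-^{(2)}$ with $|c|=1$ by transporting everything to the disk and comparing the Blaschke products $B_1\circ G$ and $B_2$ with equal zero divisors; the paper reaches the same identity more directly by observing that $h:=Q/(P\circ F)$ is analytic and zero-free in $\Omega_-^{(2)}$ with $|h|=1$ on the boundary, hence a unimodular constant. The divergence is in how $F$ is then globalized. The paper treats $P\circ F=cQ$ as an algebraic relation, continues $F$ as an algebraic function over $\hat\bC$, and excludes branch points outside $\overline{\Omega_-^{(2)}}$ by noting that a singular branch value $\zeta=F(b)$ must be a critical point of $P$, hence lies in $\Omega_-^{(1)}$ by Proposition \ref{propcrit}, which forces $|Q(b)|<1$ and so $b\in\Omega_-^{(2)}$, a contradiction; the extension is then an automorphism of $\hat\bC$ fixing $\infty$. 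You instead weld $F$ to the explicit exterior map $\Phi_{+,1}\circ\bigl(\mu\,\Phi_{+,2}^{-1}\bigr)$ built from \eqref{nroot}, exactly in the spirit of the uniqueness argument of Theorem \ref{bpasfp}. This works, and you correctly flag the two points needing care: the branch bookkeeping (the quotient $\bigl(\Phi_{+,1}^{-1}\circ F\bigr)/\Phi_{+,2}^{-1}$ is continuous on the connected curve $\Gamma^{(2)}$ with values among the finitely many $n$th roots of $c$, hence constant, which selects the right $\mu$) and the removability of the smooth Jordan curve for the glued continuous function (Morera/reflection). Comparing the two: the paper's route needs no boundary-extension or welding machinery and yields as a by-product the subsequent proposition locating all finite branch points of algebraic solutions of $P(F(w))=Q(w)$ inside $\{|Q|<1\}$; yours avoids analytic continuation of algebraic functions altogether, at the cost of invoking Carath\'eodory extension and the lemniscate-specific exterior uniformization $\Phi_+^{-1}=\sqrt[n]{P}$. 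Both correctly conclude that the resulting automorphism of $\hat\bC$ has its only (simple) pole at $\infty$, hence is affine, with $a\in\bC$ rather than $a>0$, as you note.
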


\begin{remark}
Of course, we assume that if some nodes $a_j^{(2)}$ have non-trivial multiplicities, then $F$ preserves multiplicities as well.
\end{remark}

\begin{proof}[Proof of Proposition $\ref{prop3.1}$]
Consider $h(w):=Q(w)/P(F(w))$ defined in $\Omega_-^{(2)}$. By our hypothesis, $h$ is analytic and non-vanishing in $\Omega_-^{(2)}$ (all the zeros and poles are cancelled by the zeros of $Q$) and $|h|=1$ on the lemniscate $\Gamma_2:=\partial\Omega^{(2)}_-$. Moreover, since we can say the same about $1/h$, we conclude that $h$ is a unimodular constant. Hence, we have
\begin{equation}
\Label{eq3.71} P(F(w))=cQ(w),\quad |c|=1,
\end{equation}
in $\Omega_-^{(2)}$. Equation \eqref{eq3.71} implies that $F$ is an algebraic function and can therefore be continued analytically, maintaining equation \eqref{eq3.71}, along any curve in the Riemann sphere $\hat\bC$ avoiding a finite number of points $b_1,\ldots,b_m\in\hat\bC$. Since both $P$ and $Q$ have poles of order $n$ at $\infty$, the algebraic function $F$ can only take the value $\infty$ at $\infty$ and any continuation of $F$ will have a simple pole there. Thus, if we allow the continuation of $F$ to have poles (i.e.\ we continue $F$ as a meromorphic function or, equivalently, as an analytic map into $\hat\bC$), then the singularities $b_1,\ldots,b_m$ of $F$ are all branch points in $\bC$ at which a continuation of $F$ takes on finite values. Thus, if we continue $F$ along a curve $C$ from inside $\Omega^{(2)}_-$, ending at one of the branch points $b=b_j$, for some $j=1,\ldots, m$, (but avoiding the others) and $F$ develops a singularity at $b$, then the value $\zeta:=F(b)\in \bC$ must be a critical point of $P$. (Otherwise, $P$ would be locally biholomorphic near $\zeta$ and $F$ would not be singular there.) If we can show that there are no true branch points, i.e.\ the continuation of $F$ along $C$ to $b$ (for all branch points $b$ and all curves $C$ as described above) is analytic at $b$, then $F$ extends as a holomorphic map of $\hat \bC$ onto itself sending $\infty$ to itself with multiplicity one and no other poles, and is therefore of the form $F(w)=aw+b$, which would complete the proof of Proposition \ref{prop3.1}. Since $F$ is already analytic in a neighborhood of $\overline{\Omega^{(2)}_-}$, it suffices to show that there are no branch points in $\bC\setminus\overline{\Omega^{(2)}_-}$, so there is no loss of generality in assuming that $b\in\bC\setminus\overline{\Omega^{(2)}_-}$. Thus, assume, in order to reach a contradiction, that the continuation of $F$ along $C$ is singular at $b$ and that this continuation satisfies $\zeta=F(b)$. Then, as mentioned above, $\zeta$ is a critical point of $P$ and, hence by Proposition \ref{propcrit}, we have $\zeta\in \Omega^{(1)}_-$. This implies that $|P(F(b))|=|P(\zeta)|<1$, which implies that $|Q(b)|<1$. But then $b\in \Omega^{(2)}_-$, which is a contradiction since $b\not \in \Omega^{(2)}_-$ by assumption. This completes the proof of Proposition \ref{prop3.1}.
\end{proof}

We single out here the following observation, interesting
in its own right, that was used in the proof of
Proposition \ref{prop3.1} above.

\begin{proposition}
Let $P$ be a polynomial of degree $n$ and $F$ an algebraic function satisfying
\begin{equation}
\Label{eq4.1}
P\(F(w)\)=Q(w),
\end{equation}
where $Q$ is a polynomial. Let  $\Omega_-:=\{z\in \bC\colon|P(z)|<1\}$ and assume that $\Omega_-$ is connected (i.e.\  $\{z\in \bC\colon|P(z)|<1\}$ is a proper lemniscate of degree $n$).Then, all finite branch points of $F$ must lie inside $\{z\colon |Q(w)|<1\}$.
\end{proposition}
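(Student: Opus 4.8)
The plan is to isolate the branch-point argument already used in the proof of Proposition~\ref{prop3.1}. Fix a finite branch point $b\in\bC$ of $F$ and continue $F$ along a curve ending at $b$ so that the resulting branch is genuinely branched at $b$; since $P,Q$ are entire, the relation \eqref{eq4.1} persists along the continuation by the identity theorem. First I would note that this branch has a Puiseux expansion $F(w)=\sum_{k\ge k_0}c_k(w-b)^{k/m}$ at $b$, so $\zeta:=\lim_{w\to b}F(w)$ exists in $\hat\bC$, and that in fact $\zeta\in\bC$: since $P$ is a polynomial of degree $n$, its only pole on $\hat\bC$ is at $\infty$, so if $F(w)\to\infty$ then $Q(w)=P(F(w))\to\infty$, contradicting that $Q$ is a polynomial and $b$ a finite point (equivalently, $k_0\ge 0$). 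By continuity, $P(\zeta)=Q(b)$.

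Next I would show that $\zeta$ must be a critical point of $P$. Suppose not; then $P$ is locally biholomorphic near $\zeta$, so a single-valued holomorphic branch of $P^{-1}$ is defined on a neighbourhood $V$ of $P(\zeta)=Q(b)$. On a sufficiently small punctured disk $D^{*}$ centered at $b$, every local branch of the continued $F$ takes values in the neighbourhood of $\zeta$ on which $P$ is injective (all branches tend to $\zeta$), while $Q(D^{*}\cup\{b\})\subset V$ by continuity of $Q$. Since each such branch satisfies $P\circ F=Q$ on $D^{*}$, it must coincide there with $P^{-1}\circ Q$; hence $F$ is single-valued on $D^{*}$ and extends holomorphically across $b$, contradicting that $b$ is a branch point. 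Thus $\zeta$ is a critical point of $P$, and $P(\zeta)$ is a critical value of $P$.

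Finally, the hypothesis that $\{z\in\bC\colon|P(z)|=1\}$ is a proper lemniscate of degree $n$ means exactly that this lemniscate is smooth and $\Omega_-$ is connected, so Proposition~\ref{propcrit} applies and every critical value of $P$ lies in $\bD$. Therefore $|Q(b)|=|P(\zeta)|<1$, i.e.\ $b\in\{w\in\bC\colon|Q(w)|<1\}$, which is the assertion. The one delicate point in this scheme is the middle step: one has to propagate \eqref{eq4.1} to every local branch of $F$ near $b$ and then argue that coincidence with the single-valued map $P^{-1}\circ Q$ rules out genuine branching. That is the hinge of the proof and the only place where the assumption that $\zeta$ is not a critical point of $P$ is used; the finiteness of $\zeta$ and the appeal to Proposition~\ref{propcrit} are routine.
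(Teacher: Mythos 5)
Your proposal is correct and follows the same route as the paper: identify the limit value $\zeta=F(b)$ of a branch singular at $b$, note it is finite because $Q(b)\neq\infty$, argue that $\zeta$ must be a critical point of $P$ (else $P^{-1}\circ Q$ gives a single-valued extension across $b$), and then invoke Proposition \ref{propcrit} to conclude $|Q(b)|=|P(\zeta)|<1$. You merely spell out in more detail (Puiseux expansion, the local inverse argument) what the paper compresses into a reference back to the proof of Proposition \ref{prop3.1}.
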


\begin{proof}  If $b\in \bC$ is a branch point of $F$ and there is a branch of $F$ that is singular at $b$, then the value $\zeta=F(b)$ of this branch is not $\infty$ (since $Q(b)\neq \infty$) and, as noted in the proof of Proposition \ref{prop3.1} above, $\zeta$ is a critical point of $P$. By Proposition \ref{propcrit}, we have $\zeta\in \Omega_-$ and, hence, $|Q(b)|=|P(F(b))|=|P(\zeta)|<1$, which completes the proof.
\end{proof}

We end this paper with two remarks.
(i) Since we have not been able, so far, to
find a constructive and simple way of identifying a lemniscate $\Gamma$
with a given fingerprint $k=\sqrt[n]{B}$, our approach does
seem to be inferior to that pioneered by Mumford and Sharon
\cite{SM}. However, taking into account the extremely simple form of
fingerprints of lemniscates ($\sqrt[n]{B}$, where $B$ is a
Blaschke product of degree $n$), perhaps, some way of efficiently combining
the two approaches may be useful. More precisely: 1) Approximate a given fingerprint
$k:\mbb{T}\to\mbb{T}$ by $\sqrt[n]{B}$ (as can be done arbitrarily well by Theorem \ref{approx}). 2) Find an approximate
shape of the lemniscate $\Gamma$ corresponding to $\sqrt[n]{B}$ by using the
technique from \cite{SM} based on Schwarz--Christoffel integrals. 3)
Approximate the shape obtained in \cite{SM} by a lemniscate as in
the proof of Hilbert's theorem which is quite constructive --- cf.\
\cite{Wa}. Hopefully, in the future, some numerical experiments
carried out along these lines will support our envisioning of the
(simple) fingerprints of lemniscates as natural ``coordinates''
in the space of shapes.

(ii) As a final remark in this paper, we would like to point out a direction of further study. If $R(z)$ is a rational function of degree $n$, then we may consider the rational lemniscate $\Gamma:=\{z\in \hat \bC\colon |R(z)|=1\}$. Under the assumption that the interior $\Omega_-:=\{z\in \hat\bC\colon |R(z)|<1\}$ is connected and simply connected (and there are no singularities on $\Gamma$), the rational lemniscate $\Gamma$ is a shape and we can consider its fingerprint $k\colon \bT\to\bT$. If $\Phi_\pm$ denote the corresponding Riemann maps $\bD_\pm\to \Omega_\pm$ as above (where we assume, say, that $\infty\in \Omega_+$, since otherwise the strategy of defining the fingerprint would need to be slightly modified), then the fingerprint would satisfy the functional equation $\Phi_+\circ k=\Phi_-$. Composing with $R$ to the left on both sides, we obtain an equation of the form $A\circ k=B$, where $A=R\circ \Phi_+$ and $B=R\circ \Phi_-$ are Blaschke products of degree $n$. In the case of polynomial lemniscates considered in this paper,  we have $A(z)=z^n$. We suspect that all diffeomorphisms $k\colon \bT\to \bT$ that arise from the algebraic equation $A\circ k=B$, where $A$ and $B$ are Blaschke products of degree $n$, are fingerprints of rational lemniscates. A first obstacle in this study would be to establish an analytic criterion for when the rational lemniscate $\Gamma=\{z\in \hat \bC\colon |R(z)|=1\}$ has a connected and simply connected interior $\Omega_-$. In the polynomial case, the criterion for $\Omega_-$ to be connected and simply connected is that the $n-1$ finite critical values of the polynomial $P$ lie in the unit disk (Proposition \ref{propcrit}). In the case of a more general rational lemniscate, it is easily seen that this is also a necessary but not a sufficient condition. However, one can show that if $R$ has $n-1$ critical values in $\bD$ {\it and} $\Omega_-$ is connected {\it or} every component of $\Omega_-$ is simply connected, then $\Omega_-$ is both connected and simply connected. The authors hope to return to the case of rational lemniscates in a future paper.

\bibliographystyle{amsplain}
\bibliography{td2}

\end{document}